\newtheorem{theorem}{Theorem}[section]
\newtheorem{lemma}[theorem]{Lemma}
\newtheorem{conjecture}[theorem]{Conjecture}
\newtheorem{proposition}[theorem]{Proposition}
\theoremstyle{definition}
\theoremstyle{remark}
\numberwithin{equation}{section}
\begin{document}

\title[Uniform constants in nonlinear Hausdorff-Young inequalities]
{Uniform constants in Hausdorff-Young inequalities for the Cantor group model of the scattering transform}

\author{Vjekoslav Kova\v{c}}
\address{Department of Mathematics, UCLA, Los Angeles, CA 90095-1555}
\email{vjekovac@math.ucla.edu}

\subjclass[2010]{Primary 34L25; Secondary 42A38}

\begin{abstract}
Analogues of Hausdorff-Young inequalities for the Dirac scattering
transform (a.k.a.\@ $SU(1,1)$ nonlinear Fourier transform) were
first established by Christ and Kiselev \cite{CK1},\cite{CK2}. Later Muscalu,
Tao, and Thiele \cite{MTT} raised a question if the constants
can be chosen uniformly in $1\leq p\leq 2$. Here we give a positive answer
to that question when the Euclidean real line is replaced by its
Cantor group model.
\end{abstract}

\maketitle

\section{Introduction}

The following context arises from the eigenfunction equation for the Dirac operator,
after the natural ansatz is made, see \cite{MTT}.
Let $f\colon\mathbb{R}\to\mathbb{C}$ be a compactly supported integrable function.
For any $\xi\in\mathbb{R}$ consider the initial value problem in the matrix form:
\begin{equation}\label{vkivp}
\frac{\partial}{\partial x}G(x,\xi) = G(x,\xi) W(x,\xi),\qquad G(-\infty,\xi)=
\left[\begin{array}{cc} 1 & 0 \\ 0 & 1 \end{array}\right],
\end{equation}
where
$$ G(x,\xi)=\left[\begin{array}{cc} a(x,\xi) & \overline{b(x,\xi)} \\ b(x,\xi) & \overline{a(x,\xi)} \end{array}\right],
\qquad W(x,\xi)=\left[\begin{array}{cc} 0 & \overline{f(x)} e^{-2\pi i x \xi} \\
f(x) e^{2\pi i x \xi} & 0 \end{array}\right]. $$
The problem (\ref{vkivp}) has a unique solution with absolutely continuous functions
$a(\cdot,\xi)$ and $b(\cdot,\xi)$ that satisfy the differential equation for a.e.\@ $x\in\mathbb{R}$ and
eventually become constant as $x\to-\infty$ or $x\to\infty$.
The limit
\begin{equation}\label{vkscatteringdef}
G(\infty,\xi) = \left[\begin{array}{cc} a(\infty,\xi) & \overline{b(\infty,\xi)} \\
b(\infty,\xi) & \overline{a(\infty,\xi)} \end{array}\right]
=\lim_{x\to\infty}\left[\begin{array}{cc} a(x,\xi) & \overline{b(x,\xi)} \\
b(x,\xi) & \overline{a(x,\xi)} \end{array}\right]
\end{equation}
is a function in $\xi\in\mathbb{R}$, called the \emph{Dirac scattering transform} of $f$.
It is easy to see that all matrices $G(x,\xi)$ must belong to the Lie group
$$ \mathrm{SU}(1,1) := \left\{\left[\begin{array}{cc} a & \overline{b} \\ b & \overline{a} \end{array}\right]
\ : \ a,b\in\mathbb{C},\ |a|^2-|b|^2=1 \right\}, $$
and so $\xi\mapsto G(\infty,\xi)$ is indeed a function from $\mathbb{R}$ to $\mathrm{SU}(1,1)$.
In analogy with the (linear) Fourier transform on $\mathbb{R}$, we also call it the $\mathrm{SU}(1,1)$
\emph{nonlinear Fourier transform} of $f$, the term originating in \cite{TT}.
We simply write $G(\xi)$, $a(\xi)$, $b(\xi)$ in place of $G(\infty,\xi)$, $a(\infty,\xi)$, $b(\infty,\xi)$.

Using elementary contour integration one can show a ``nonlinear analogue'' of the Plancherel theorem:
$$ \| (2\ln |a(\xi)|)^{1/2} \|_{\mathrm{L}^2_\xi(\mathbb{R})} = \|f\|_{\mathrm{L}^2(\mathbb{R})} \,. $$
The first appearance of this identity (although in discrete setting) dates back to \cite{V1},\cite{V2}.
From this equality it seems that $(\ln |a|)^{1/2}$ is the appropriate measure of size
for matrices in $\mathrm{SU}(1,1)$, so in the spirit of classical Fourier analysis one can consider
nonlinear analogues of Hausdorff-Young inequalities for $1\leq p<2$:
\begin{equation}\label{vkhy}
\| (\ln |a(\xi)|)^{1/2} \|_{\mathrm{L}^q_\xi(\mathbb{R})} \leq C_p\, \|f\|_{\mathrm{L}^p(\mathbb{R})} ,
\end{equation}
where $p$ and $q$ are conjugated exponents.
Besides the trivial Riemann-Lebesgue type of estimate for $p=1$, one can show (\ref{vkhy})
for $1<p<2$, as is first done in \cite{CK1},\cite{CK2}.
These papers also prove the maximal version of (\ref{vkhy}), i.e.\@ Menshov-Paley-Zygmund type inequality.
Even stronger, variational estimates for $1\leq p<2$ are shown recently in \cite{OSTW}.

However, the truncation method from \cite{CK1},\cite{CK2} gives constants $C_p$ in (\ref{vkhy})
that blow up as $p\to 2-$.
For that reason Muscalu, Tao, and Thiele raised the following conjecture in \cite{MTT}.
\begin{conjecture}\label{vkconjecture}
There exists a universal constant $C>0$ such that for any pair of conjugated exponents
$1\leq p\leq 2$ and $2\leq q\leq \infty$ and every function $f$ as above one has
$$ \| (\ln |a(\xi)|)^{1/2} \|_{\mathrm{L}^q_\xi(\mathbb{R})} \leq C\, \|f\|_{\mathrm{L}^p(\mathbb{R})}. $$
\end{conjecture}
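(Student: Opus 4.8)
The plan is to introduce the Cantor group model explicitly and prove the estimate by induction on the number of dyadic scales. After fixing the model one reduces, by a routine density and continuity argument, to $f$ a step function adapted to some finite level $n$; then $\widehat{f}$ is a finite ordered product of $\mathrm{SU}(1,1)$ matrices indexed by the $2^{n}$ dyadic intervals of length $2^{-n}$, and $\xi\mapsto G(\xi)$ is a step function supported on $2^{n}$ frequency cells. Two structural facts drive everything. First, the splitting property: if $f=f_{L}+f_{R}$ with $f_{L},f_{R}$ supported on two adjacent half-intervals, then $\widehat{f}=\widehat{f_{L}}\,\widehat{f_{R}}$ (matrix product), simply because the evolution in \eqref{vkivp} composes; on the Cantor group this sorts the $2^{n}$ frequencies of $f$ into $2^{n-1}$ pairs $\{\xi,\xi'\}$ so that, writing $G^{L},G^{R}$ for the values of the rescaled half-transforms at the associated coarse frequency and $z=a^{L}a^{R}$, $w=\overline{b^{L}}\,b^{R}$, one has $|a_{f}(\xi)|=|z+w|$ and $|a_{f}(\xi')|=|z-w|$ (the two cases differing by a sign-twist $T$ of the off-diagonal entry of $G^{R}$). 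Second, the scaling relation $\ln|a_{f}(\xi)|=\ln|a_{\widetilde{f}}(\delta\xi)|$ when $f$ lives on a dyadic interval of length $\delta$, with $\widetilde{f}$ its rescaling to $[0,1)$. The base case $n=0$ (constant $f=c$) is the one-variable inequality $(\ln\cosh|c|)^{1/2}\le 2^{-1/2}|c|$, which is in any case forced by the Plancherel identity; thus a universal constant is available and only its multiplicative growth per scale matters.

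The elementary engine is the pointwise estimate
\[
\bigl(\ln|z\pm w|\bigr)^{1/2}\ \le\ \bigl(\ln|a^{L}|\bigr)^{1/2}+\bigl(\ln|a^{R}|\bigr)^{1/2}\qquad(G^{L},G^{R}\in\mathrm{SU}(1,1)),
\]
which follows from $|z\pm w|\le|z|+|w|=\cosh(r_{1}+r_{2})$ (writing $|a^{L}|=\cosh r_{1}$, $|a^{R}|=\cosh r_{2}$), the identity $\ln\cosh(r_{1}+r_{2})=\ln\cosh r_{1}+\ln\cosh r_{2}+\ln(1+\tanh r_{1}\tanh r_{2})$, and the inequalities $\ln(1+t)\le t$ and $\tanh^{2}r\le 2\ln\cosh r$. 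Inserting this into the splitting property and using Minkowski's inequality in $\ell^{q}$, together with the exact identity $\|\widetilde{f}_{L}\|_{p}^{p}+\|\widetilde{f}_{R}\|_{p}^{p}=2^{1-p}\|f\|_{p}^{p}$ and the power-mean inequality (which gives $\|\widetilde{f}_{L}\|_{p}+\|\widetilde{f}_{R}\|_{p}\le\|f\|_{p}$), produces a one-scale recursion between the optimal constants at levels $n$ and $n-1$. As a sanity check on part of the range, when $\|f\|_{p}$ is small the perturbative (Volterra/Picard) expansion of $\widehat{f}$ converges with room to spare and, combined with the linear Hausdorff--Young inequality on the Cantor group (which holds with constant $1$) and the bound $(\ln|a|)^{1/2}\le 2^{-1/2}|b|$, already gives the estimate for small $f$, hence by the scaling relation for any $f$ supported on a sufficiently short dyadic interval.

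The main obstacle is that this recursion, taken naively, loses a factor of $2$ at each scale: the quantity $(\ln|a^{L}|)^{1/2}$ feeds both members $\xi,\xi'$ of a pair, so estimating $(\ln|a_{f}(\xi)|)^{q/2}$ and $(\ln|a_{f}(\xi')|)^{q/2}$ one at a time via the engine inequality costs $2\bigl((\ln|a^{L}|)^{1/2}+(\ln|a^{R}|)^{1/2}\bigr)^{q}$, and over $n$ scales this makes the constant grow like $2^{n/q}$ --- worthless. The extra factor is, however, spurious: for $q=2$ it is cancelled by the Plancherel identity, whose recursive incarnation is the cancellation $\sum_{\text{pairs}}\ln\bigl|1-(w/z)^{2}\bigr|=0$. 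To restore uniformity for $q>2$ one must propagate down the tree a richer inductive, Bellman-type functional that monitors not only $\sum_{\xi}(\ln|a_{f}(\xi)|)^{q/2}$ but also the Plancherel-conserved quantity $\sum_{\xi}\ln|a_{f}(\xi)|$, so that the over-counting is absorbed precisely in the regime --- $r_{1},r_{2}$ comparable --- in which the Plancherel cancellation has teeth. Concretely one seeks a function $B(\sigma,\tau,\mu)$ of the three scalars $\sigma=\sum_{\xi}\ln|a_{f}(\xi)|$, $\tau=\sum_{\xi}(\ln|a_{f}(\xi)|)^{q/2}$, $\mu=\|f\|_{p}$ that dominates the desired ratio, carries the correct homogeneity, and does not increase under the one-scale map $(G^{L},G^{R})\mapsto(G^{L}G^{R},\,G^{L}TG^{R})$. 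Establishing this monotonicity is the technical heart: after the reductions above it becomes a concrete inequality in a bounded number of real variables for elementary hyperbolic functions, to be proved by calculus, most likely after splitting into cases according to the relative sizes of $r_{1}$ and $r_{2}$. This, and not any soft device, is where the real work lies.
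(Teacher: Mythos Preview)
The statement you were asked to prove is Conjecture~\ref{vkconjecture}, which concerns the Dirac scattering transform on the real line with the genuine exponentials $e^{2\pi i x\xi}$. The paper does \emph{not} prove this conjecture; it is stated as open. What the paper does prove is Theorem~\ref{vkmaintheorem}, the analogue for the $d$-adic Cantor group model, and the closing Section~4 explains why this does not transfer back to $\mathbb{R}$: the constant $C_d$ depends on $d$, and removing that dependence would already be stronger than the original conjecture. Your opening move (``introduce the Cantor group model explicitly'') therefore changes the target rather than hitting it. Nothing in your outline addresses the actual obstacle on $\mathbb{R}$, namely that the characters $e^{2\pi i x\xi}$ do not respect any finitary tree structure on which a scale-by-scale Bellman induction could run.

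Even read charitably as an attempt at the dyadic ($d=2$) case of Theorem~\ref{vkmaintheorem}, the proposal has a genuine gap precisely at the step you yourself flag as ``the technical heart.'' You correctly note that the naive pointwise engine $(\ln|z\pm w|)^{1/2}\le(\ln|a^L|)^{1/2}+(\ln|a^R|)^{1/2}$ loses a factor $2^{1/q}$ per scale, and you propose to repair this with a Bellman functional $B(\sigma,\tau,\mu)$ in three aggregate scalars. But you neither construct $B$ nor prove its monotonicity; you assert only that it ``becomes a concrete inequality\ldots to be proved by calculus.'' That is exactly where all the work lies, and the paper's solution is structurally different from what you suggest. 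Rather than tracking aggregate scalars, the paper replaces $(\ln|a|)^{1/2}$ by an explicit comparable function $\beta_d(|b|)$ (equal to $|b|e^{-|b|}$ for small $|b|$ and to a small constant times $\sqrt{1+\mathop{\mathrm{arsinh}}|b|}$ for large $|b|$), and proves a one-scale swapping inequality $\bigl(\tfrac{1}{d}\sum_k\beta_d(|B_k|)^q\bigr)^{1/q}\le\bigl(\sum_j\beta_d(|b_j|)^p\bigr)^{1/p}$ with constant exactly~$1$. The decisive device in the small regime is not the Plancherel cancellation you plan to exploit but the \emph{linear} Hausdorff--Young inequality on $\mathbb{Z}_d$ applied to the linearized part $B_k'=\sum_j b_j'\,e^{2\pi ijk/d}$ of the matrix product, combined with perturbation estimates for the nonlinear remainder; the large regime is handled separately via submultiplicativity of the operator norm. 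Your three-scalar functional does not see this frequency structure of the $B_k$, so it is not clear any $B(\sigma,\tau,\mu)$ of the proposed form can succeed --- and in any case you have not produced one.
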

It is interesting to notice that, although we know that (\ref{vkhy}) holds in the endpoint case $p=2$,
we still cannot conclude uniformity of $C_p$ for neighboring values of $p$.
Such anomalies are not possible for linear operators due to the Riesz-Thorin interpolation theorem.
However, our transformation $f\mapsto (\ln |a(\cdot)|)^{1/2}$ is truly nonlinear,
and no standard interpolation result can be applied directly to prove the conjecture.

\smallskip
The goal of this paper is to prove Conjecture \ref{vkconjecture} in the case
when the exponentials $e^{2\pi i x \xi}$ in $W(x,\xi)$ are replaced by
the character function $E_d(x,\xi)$ of the $d$-adic Cantor group model of the real line,
rigorously defined in the next section.
The method of the proof is a monotonicity argument over scales, which is typically
a privilege of finitary group models.
Such arguments are also sometimes called \emph{Bellman function} proofs (see for instance \cite{NT}),
as they require construction of
an auxiliary function with certain monotonicity and convexity properties.

The main idea is taken from the ``local proof'' of the Cantor group model Plancherel theorem given in \cite{MTT}.
A new contribution is the construction of the modified ``swapping function'' $\beta_d$
that satisfies certain $\mathrm{L}^p\to\mathrm{L}^q$ estimates uniformly in $1\leq p\leq 2$.
In the proof we use linear Hausdorff-Young inequalities on $\mathbb{Z}/d\mathbb{Z}$,
as a substitute for some cancellation identities in \cite{MTT}.

\smallskip
Let us remark that our qualitative assumption on $f$ is crucial in order to be able to define the scattering
transform properly.
If $f$ is merely in $\mathrm{L}^p(\mathbb{R})$ for $1\leq p<2$ (but without compact support),
then from maximal inequalities in \cite{CK1},\cite{CK2} it follows that the limit in (\ref{vkscatteringdef})
exists for a.e.\@ $\xi\in\mathbb{R}$, but this is a rather nontrivial result.
However, for $f\in\mathrm{L}^2(\mathbb{R})$ that is still an open problem, commonly known as
the \emph{nonlinear Carleson theorem}.
Its Cantor group model variant is proven in \cite{MTT}.
One can still extend the definition of the scattering transform using density arguments,
as in \cite{TT}.

\smallskip\noindent
\textbf{Acknowledgment.}
The author would like to thank his faculty advisor, Prof.\@ Christoph Thiele,
for suggesting the problem and for his help on improving the presentation.

\section{The monotonicity argument}

Fix an integer $d\geq 2$, and denote $\mathbb{Z}_d:=\mathbb{Z}/d\mathbb{Z}$.
For any $x,\xi\in[0,\infty)$ that can be written uniquely\footnote{
Because of ambiguous base $d$ representation of some reals, the function $E_{d}$ is not well-defined
on a set of measure zero. The same comment applies to the later identification
of $\mathbb{A}_d$ with $[0,\infty)$.}
in base $d$ number system as
$x=\sum_{n\in\mathbb{Z}}x_n d^n$ and $\xi=\sum_{n\in\mathbb{Z}}\xi_n d^n$,
we define
$$ E_{d}(x,\xi) := e^{(2\pi i/d)\sum_{n\in\mathbb{Z}} x_n \xi_{-1-n}} \,. $$
Then the $\mathrm{L}^\infty$ function
$E_{d}\colon[0,\infty)\times[0,\infty)\to\mathrm{S}^1$
is called the \emph{Cantor group character function}.
To justify the name, we identify $[0,\infty)$
with a subgroup $\mathbb{A}_d$ of the infinite group product $\mathbb{Z}_d^\mathbb{Z}$ given by
\begin{align*}
\mathbb{A}_d := \big\{(x_n)_{n\in\mathbb{Z}} \ : \ \ & x_n\in\mathbb{Z}_d\textrm{ for every }n\in\mathbb{Z},
\textrm{ and there exists} \\
& n_0\in\mathbb{Z}\textrm{ such that }x_n=\mathbf{0}\textrm{ for every }n\geq n_0 \big\} \,,
\end{align*}
via the identification
\,$\mathbb{A}_d\to[0,\infty)$,\, $(x_n)_{n\in\mathbb{Z}}\mapsto\sum_{n\in\mathbb{Z}}x_n d^n$.
Then $E_{d}(\cdot,\cdot)$ realizes duality between
$\mathbb{A}_d$ and its dual group $\hat{\mathbb{A}}_d\cong\mathbb{A}_d$.

\smallskip
For a compactly supported integrable function $f\colon[0,\infty)\to\mathbb{C}$ and $\xi\in[0,\infty)$
consider the initial value problem on $[0,\infty)$:
$$ \frac{\partial}{\partial x}G(x,\xi) = G(x,\xi) W(x,\xi),\qquad G(0,\xi)=
\left[\begin{array}{cc} 1 & 0 \\ 0 & 1 \end{array}\right], $$
where
$$ G(x,\xi)=\left[\begin{array}{cc} a(x,\xi) & \overline{b(x,\xi)} \\
b(x,\xi) & \overline{a(x,\xi)} \end{array}\right],
\qquad W(x,\xi)=\left[\begin{array}{cc} 0 & \overline{f(x)\, E_{d}(x,\xi)} \\
f(x)\, E_{d}(x,\xi) & 0 \end{array}\right]. $$
The limit
$$ G(\xi) = \left[\begin{array}{cc} a(\xi) & \overline{b(\xi)} \\
b(\xi) & \overline{a(\xi)} \end{array}\right]
:= \lim_{x\to\infty}\left[\begin{array}{cc} a(x,\xi) & \overline{b(x,\xi)} \\
b(x,\xi) & \overline{a(x,\xi)} \end{array}\right] $$
defines a function $\xi\mapsto G(\xi)$ from $[0,\infty)$ to $\mathrm{SU}(1,1)$,
which we call the \emph{Cantor group model Dirac scattering transform} of $f$.
Dependence on $d$ is not notationally emphasized but is understood.
If for some interval $I\subseteq[0,\infty)$ we replace $f$ by $f\mathbf{1}_I$,
then we will denote the corresponding $G$, $a$, $b$ respectively by
$G_I$, $a_I$, $b_I$.

\smallskip
The main result of the paper is the following theorem.
\begin{theorem}\label{vkmaintheorem}
For every integer $d\geq 2$ there exists a constant $C_d>0$ such that for any pair of
conjugated exponents $1\leq p\leq 2$ and $2\leq q\leq \infty$ and
every function $f$ as above one has
$$ \| (\ln |a(\xi)|)^{1/2} \|_{\mathrm{L}^q_\xi(\mathbb{R})}
\leq C_d \|f\|_{\mathrm{L}^p(\mathbb{R})}. $$
\end{theorem}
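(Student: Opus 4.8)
The plan is to run a Bellman-function / monotonicity-over-scales argument adapted to the $d$-adic Cantor group. Since $f$ is compactly supported and integrable, after rescaling we may assume $f$ is supported in a single $d$-adic interval $I_0=[0,d^N)$ for some $N$, and it suffices to prove the estimate with a bound independent of $N$ and of the refinement level. The key structural fact is multiplicativity of the nonlinear Fourier transform: if an interval $I$ is partitioned into its $d$ children $I^{(0)},\dots,I^{(d-1)}$ (the $d$-adic subintervals one scale finer), then $G_I(\xi)$ factors as an ordered product of the $G_{I^{(j)}}(\xi)$, and on each child the character $E_d(x,\xi)$ has a simple product structure: $E_d(x,\xi)=E_d(x,\xi)\big|_{\text{coarse part}}\cdot (\text{pure phase depending on the next digit of }\xi)$. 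This is precisely the place where, in \cite{MTT}, a cancellation identity is used; here I would instead invoke the \emph{linear} Hausdorff--Young inequality on $\mathbb{Z}_d$ to control the ``swapping'' that occurs when one commutes the coarse-scale phase past the finer-scale factors.

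Concretely, I would introduce, for each $d$-adic interval $I$ and each exponent $p$, a quantity measuring the ``local mass'' of $f$ on $I$, namely $\|f\mathbf{1}_I\|_{\mathrm{L}^p}$, together with the ``local output'' $\|(\ln|a_I(\xi)|)^{1/2}\|_{\mathrm{L}^q_\xi(I^{\perp})}$ over the appropriate frequency fiber, and then design a Bellman function $B_d=B_d(u,v,\dots)$ of these local data with two properties: (i) a size bound $B_d(u,v)\le C_d\,v$ (or $B_d$ dominates the square of the output against $C_d^2$ times the $p$-th power of the input), and (ii) a discrete convexity/superadditivity inequality across one $d$-adic splitting, i.e. the value of $B_d$ on the parent interval is at least the sum of the values on the $d$ children plus the contribution of the newly resolved frequency digit. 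Iterating (ii) from the top scale $I_0$ down to arbitrarily fine scales, and passing to the limit where each leaf interval carries negligible mass (so that $a_{\text{leaf}}\approx 1$ and the telescoped lower-order terms vanish by a Riemann--Lebesgue / smallness estimate), yields the theorem. The construction of $B_d$ is exactly the ``modified swapping function $\beta_d$'' advertised in the introduction: one must build $\beta_d$ so that the per-scale frequency contribution, summed against the linear Hausdorff--Young inequality on $\mathbb{Z}_d$, reproduces the $\mathrm{L}^p\to\mathrm{L}^q$ scaling \emph{with a constant uniform in $p\in[1,2]$}.

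The main obstacle I anticipate is step (i)--(ii) reconciliation, i.e. finding a single $\beta_d$ (equivalently, a single Bellman function) whose convexity inequality holds \emph{with the same constant for all $p\in[1,2]$ simultaneously}. The naive choice coming from Riesz--Thorin-type interpolation of the two endpoints $p=1$ (Riemann--Lebesgue) and $p=2$ (the nonlinear Plancherel identity stated in the introduction) degenerates as $p\to 2-$, which is precisely why the truncation method of \cite{CK1},\cite{CK2} fails; so the $p$-uniformity must instead be extracted from the one-scale inequality, using that at each scale the relevant object is a genuinely linear Fourier transform on the finite group $\mathbb{Z}_d$, for which the Hausdorff--Young constant is $1$ uniformly in $p$. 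Making the bookkeeping of the frequency fibers $I^{\perp}$ precise — so that the telescoping of frequency digits across scales exactly matches the decomposition of $\xi\in[0,\infty)$ into its base-$d$ digits, and so that the ordered-product (non-commutativity) of the $\mathrm{SU}(1,1)$ factors only contributes controllable error — is the technical heart, but it is finitary and should succeed by induction on the number of scales; the limiting argument to remove the regularization is then routine given the qualitative hypotheses on $f$.
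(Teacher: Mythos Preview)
Your plan is essentially the paper's own strategy: a monotonicity-over-scales argument using tiles in $\mathbb{A}_d\times\hat{\mathbb{A}}_d$, driven by a one-scale ``swapping'' inequality for a carefully designed function $\beta_d$, with the linear Hausdorff--Young inequality on $\mathbb{Z}_d$ supplying the $p$-uniform constant at each step. The paper packages this as a single scalar quantity $\mathcal{B}_n$ (an $\ell^p$-over-columns of $\ell^q$-over-rows of $\beta_d(|b_P|)$) and shows $\mathcal{B}_{n+1}\le\mathcal{B}_n$; the multi-scale induction is then a two-line telescoping, not the hard part.

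The one place where your proposal is too optimistic is the sentence ``the ordered-product (non-commutativity) of the $\mathrm{SU}(1,1)$ factors only contributes controllable error \ldots\ it is finitary and should succeed by induction on the number of scales.'' The entire difficulty sits in the \emph{single-scale} inequality
\[
\Big(\tfrac{1}{d}\sum_{k}\beta_d(|B_k|)^{q}\Big)^{1/q}\ \le\ \Big(\sum_{j}\beta_d(|b_j|)^{p}\Big)^{1/p},
\]
which must hold with constant exactly $1$ (any $C>1$ blows up under iteration), uniformly in $p$. This is not bookkeeping: the nonlinear terms in the matrix product are \emph{not} negligible in general, and no generic perturbation argument works. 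The paper's $\beta_d$ is built with two different regimes --- $\beta_d(t)=te^{-t}$ for $t$ below a threshold $t_d\sim d^{-5}$, and $\beta_d(t)=(2d)^{-5}\sqrt{1+\mathrm{arsinh}\,t}$ above it --- precisely so that (i) in the ``all $|b_j|$ small'' regime the damping factor $e^{-t}$ absorbs the cubic-and-higher error terms left over after applying linear Hausdorff--Young to the linearization, (ii) in the ``one large $|b_m|$'' regime a second linearization around $b_m$ and concavity of $t\mapsto\mathrm{arsinh}\,t$ close the estimate, and (iii) in the ``two or more large $|b_j|$'' regime the crude operator-norm bound $\mathrm{arsinh}|B_k|\le\sum_j\mathrm{arsinh}|b_j|$ already suffices. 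Your proposal does not yet contain the idea that $\beta_d$ must be \emph{strictly} sub-linear near $0$ (the $e^{-t}$ factor) to pay for the nonlinear remainder, nor the threshold splitting; without these the one-scale inequality with constant $1$ fails, and the scheme collapses.
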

The proof is given below, with the main technical construction postponed until the next section.
In the following exposition we need a couple of simple facts proved in \cite{MTT}.
\begin{lemma}[from \cite{MTT}]\label{vklemmamtt1}
If $I$ and $\omega$ are two $d$-adic intervals\footnote{These are intervals of the form
$\big[d^n m,d^n(m+1)\big)$, for some $m,n\in\mathbb{Z}$, $m\geq 0$.}
with $|I| |\omega|=1$, then $\xi\mapsto |a_I(\xi)|$ and $\xi\mapsto |b_I(\xi)|$ are constant functions on $\omega$.
\end{lemma}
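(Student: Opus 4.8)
The plan is to combine a gauge (modulation) symmetry of the scattering ODE with the bilinear structure of the exponent defining $E_d$.

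First I would record a gauge covariance of the scattering data. For a compactly supported integrable $h$ write $(a[h],b[h])$ for the data obtained by solving $a'=\overline{b}\,h$, $b'=\overline{a}\,h$ with $(a,b)=(1,0)$ to the left of the support of $h$ and then passing to $x\to\infty$. Then for any \emph{constant} $\lambda\in\mathrm{S}^1$ one has $a[\lambda h]=a[h]$ and $b[\lambda h]=\lambda\,b[h]$: indeed $(a[h],\lambda b[h])$ solves the same system with $\lambda h$ in place of $h$ and the same initial data, so by uniqueness of the initial value problem it \emph{is} the solution. In particular $|a[\lambda h]|=|a[h]|$ and $|b[\lambda h]|=|b[h]|$.

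The combinatorial heart is the claim that for $d$-adic intervals $I,\omega$ with $|I||\omega|=1$ the character factors on the rectangle $I\times\omega$ as $E_d(x,\xi)=u_\omega(x)\,v_I(\xi)$ with $u_\omega,v_I$ unimodular, $u_\omega$ depending only on $x$ (and on $\omega$) and $v_I$ only on $\xi$ (and on $I$). Writing $|I|=d^{n}$, so $|\omega|=d^{-n}$, one observes that for $x\in I$ the base-$d$ digits $x_k$ with $k\geq n$ are pinned down by $I$ while those with $k\leq n-1$ range freely, and dually that for $\xi\in\omega$ the digits $\xi_k$ with $k\geq -n$ are pinned down by $\omega$ while those with $k\leq -n-1$ range freely. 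In the (always finite) sum $\sum_k x_k\xi_{-1-k}$ the pairing $k\leftrightarrow -1-k$ carries $\{k\geq n\}$ exactly onto $\{-1-k\leq -n-1\}$ and $\{k\leq n-1\}$ exactly onto $\{-1-k\geq -n\}$; hence $\sum_k x_k\xi_{-1-k}=\sum_{k\geq n}x_k\xi_{-1-k}+\sum_{k\leq n-1}x_k\xi_{-1-k}$, where in the first sum the $x_k$ are $I$-fixed and the $\xi$-digits are free, and in the second the $\xi$-digits are $\omega$-fixed and the $x_k$ are free. Exponentiating the two partial sums gives $v_I(\xi)$ and $u_\omega(x)$ respectively. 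I expect this digit bookkeeping — getting the split index and the ``which digits are frozen'' statements exactly right — to be the only place requiring care; the rest is soft.

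Finally I would combine the two ingredients. On its support $I$ we have $f(x)E_d(x,\xi)=v_I(\xi)\,g(x)$ where $g:=f\,u_\omega$ is a fixed function, independent of the particular $\xi\in\omega$, and $v_I(\xi)$ is a unimodular constant in $x$. Applying the gauge covariance with $h=f\mathbf{1}_I\,E_d(\cdot,\xi)$ and $\lambda=v_I(\xi)$ gives $a_I(\xi)=a[g]$ and $b_I(\xi)=v_I(\xi)\,b[g]$ for every $\xi\in\omega$, so $|a_I(\xi)|=|a[g]|$ and $|b_I(\xi)|=|b[g]|$ are constant on $\omega$ (in fact $a_I(\cdot)$ itself is constant there). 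One technical caveat: $E_d$, hence $u_\omega$ and $v_I$, are defined only off a measure-zero set of ambiguous base-$d$ expansions, but since $f\mapsto(a_I,b_I)$ only involves integrating $f$ this is harmless and the conclusion holds for a.e.\ $\xi\in\omega$.
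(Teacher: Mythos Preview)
Your argument is correct. The paper itself does not prove this lemma---it is quoted from \cite{MTT} without proof---so there is no in-paper argument to compare against. What you wrote is precisely the standard proof: the bilinear digit pairing in the exponent of $E_d$ forces the tensor factorization $E_d(x,\xi)=u_\omega(x)\,v_I(\xi)$ on any tile $I\times\omega$, and the gauge symmetry $(a,b)\mapsto(a,\lambda b)$ under $h\mapsto\lambda h$ then shows that only the phase of $b_I$ can vary with $\xi\in\omega$. Your index bookkeeping (the split at $k=n$ matching $-1-k=-n-1$) is right, and your remark that in fact $a_I(\cdot)$ itself, not just its modulus, is constant on $\omega$ is a correct bonus.
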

We will be working in the phase space $\mathbb{A}_d\times\hat{\mathbb{A}}_d$,
which is identified with $[0,\infty)\times[0,\infty)$.
\emph{Tiles} and \emph{multitiles} are rectangles of the form $I\times\omega$
for two $d$-adic intervals $I$, $\omega$ satisfying
$|I| |\omega|=1$ and $|I| |\omega|=d$ respectively.
Every multitile $I\times\omega$ can be partitioned into $d$ tiles by subdividing
either $I$ or $\omega$ into $d$ congruent $d$-adic intervals.
Lemma \ref{vklemmamtt1} motivates us to define $G_P$, $a_P$, $b_P$ for any tile
$P=I\times\omega$ simply as
$G_I(\xi_\omega)$, $a_I(\xi_\omega)$, $b_I(\xi_\omega)$,
where $\xi_\omega$ is the left endpoint of $\omega$.

\begin{figure}[htbp]
\includegraphics[width=0.45\textwidth]{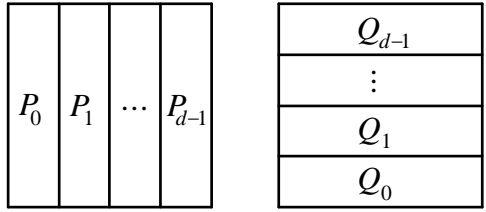}
\caption{A multitile partitioned in two ways.}
\label{vkfiguretiles}
\end{figure}
\begin{lemma}[from \cite{MTT}]\label{vklemmamtt2}
Suppose that a multitile is divided horizontally into tiles $P_0,\ldots,P_{d-1}$,
and vertically into tiles $Q_0,\ldots,Q_{d-1}$, as in Figure \ref{vkfiguretiles}.
Then
$$ \left[\begin{array}{cc} a_{Q_k} & \overline{b_{Q_k}} \\ b_{Q_k} & \overline{a_{Q_k}}\end{array}\right]
= \prod_{j=0}^{d-1} \left[\begin{array}{cc}a_{P_j} & \overline{b_{P_j}} \,e^{-2\pi i j k/d} \\
b_{P_j} \,e^{2\pi i j k/d} & \overline{a_{P_j}}\end{array}\right] $$
for $k=0,1,\ldots,d\!-\!1$.
(The matrix product has to be taken in ascending order.)
\end{lemma}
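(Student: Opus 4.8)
The plan is to derive this identity from two facts only: (i) the transfer matrix of the equation $\partial_x G = GW$ is multiplicative under concatenating the $x$-interval, and (ii) a modulation symmetry hard-wired into $E_d$. First set up coordinates: write $I = [Nd^m,(N+1)d^m)$, so $|\omega| = d^{1-m}$ and $\omega = [Md^{1-m},(M+1)d^{1-m})$; then the horizontal subdivision is $P_j = I_j\times\omega$ with $I_j = [(Nd+j)d^{m-1},(Nd+j+1)d^{m-1})$ (for $j=0,\dots,d-1$, listed left to right), and the vertical subdivision is $Q_k = I\times\omega_k$ with $\omega_k = [(Md+k)d^{-m},(Md+k+1)d^{-m})$. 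By the definition of $G_P$ for a tile $P$, one has $G_{Q_k} = G_I(\xi_{\omega_k})$ with $\xi_{\omega_k} = (Md+k)d^{-m}$, while $a_{P_j}$ and $b_{P_j}$ are the entries of $G_{I_j}(\xi_\omega)$ with $\xi_\omega = Md^{1-m}$.

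First I would carry out the digit bookkeeping that underlies everything. The two frequencies $\xi_\omega$ and $\xi_{\omega_k}$ have the same base-$d$ digit at every position except position $-m$, where the digit is $0$ for $\xi_\omega$ and $k$ for $\xi_{\omega_k}$; and for $x\in I_j$ the digit $x_{m-1}$ is identically equal to $j$. In the bilinear form in the exponent of $E_d$ the digit $x_{m-1}$ is paired precisely with $\xi_{-m}$, and every other paired term is unaffected, so
$$ E_d(x,\xi_{\omega_k}) = e^{2\pi i jk/d}\,E_d(x,\xi_\omega)\qquad\text{for all }x\in I_j. $$
Hence on $I_j$ the potential obeys $W(x,\xi_{\omega_k}) = U_j^{-1}\,W(x,\xi_\omega)\,U_j$, where $U_j := \mathrm{diag}(e^{\pi i jk/d},e^{-\pi i jk/d})$, since conjugation by this diagonal matrix multiplies the two off-diagonal entries of $W$ by $e^{\pm 2\pi i jk/d}$.

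Next I would pass from potentials to solution operators. Because $f\mathbf{1}_{I_j}$ is supported in $I_j$, the matrix $G_{I_j}(\xi)$ coincides with the transfer matrix of $\partial_x G = GW(\cdot,\xi)$ across $I_j$. Conjugating the initial value problem by $U_j$ and using uniqueness of solutions, the identity of potentials above upgrades to
$$ G_{I_j}(\xi_{\omega_k}) = U_j^{-1}\,G_{I_j}(\xi_\omega)\,U_j = \left[\begin{array}{cc} a_{P_j} & \overline{b_{P_j}}\,e^{-2\pi i jk/d} \\ b_{P_j}\,e^{2\pi i jk/d} & \overline{a_{P_j}} \end{array}\right], $$
which is exactly the $j$-th factor in the claimed product. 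Finally, since $I$ is the left-to-right union $I_0\cup\cdots\cup I_{d-1}$, the cocycle (semigroup) property of the equation gives $G_I(\xi) = \prod_{j=0}^{d-1}G_{I_j}(\xi)$ in ascending order for every $\xi$; evaluating at $\xi = \xi_{\omega_k}$ and inserting the displayed matrices yields the asserted factorization of $G_{Q_k}$.

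There is no deep obstacle: the whole content is the modulation symmetry together with concatenation multiplicativity. The one place that demands care is the first step — one must check that $\xi_\omega$ and $\xi_{\omega_k}$ really differ in exactly one base-$d$ digit, that this digit is the one dual (under the pairing defining $E_d$) to the digit of $x$ that is frozen on $I_j$, and one must keep the left-endpoint conventions for tiles and the ordering of the matrix product consistent throughout. Since this statement is taken verbatim from \cite{MTT}, an acceptable alternative is simply to cite it there.
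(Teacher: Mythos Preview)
Your argument is correct; the paper itself does not supply a proof of this lemma but simply imports it from \cite{MTT}, and the proof there is exactly the one you give---the digit computation yielding $E_d(x,\xi_{\omega_k})=e^{2\pi i jk/d}E_d(x,\xi_\omega)$ on $I_j$, then conjugation by a diagonal unitary to transfer this to the solution operator, and finally the cocycle property of $\partial_x G=GW$ to concatenate the factors in ascending order.
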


\smallskip
This section concludes with the proof of Theorem \ref{vkmaintheorem}, assuming that the following
proposition holds.
\begin{proposition}\label{vkmainprop}
There exist a constant $C_d>0$ and a function $\beta_d\colon[0,\infty)\to[0,\infty)$ such that
for every {\scriptsize $\bigg[\!\!\begin{array}{cc} a & \!\!\!\overline{b} \\ b & \!\!\!\overline{a} \end{array}\!\!\bigg]$}
$\in\mathrm{SU}(1,1)$
\begin{equation}\label{vkmainpropeq1}
C_d^{-1} (\ln|a|)^{1/2} \,\leq\, \beta_d(|b|) \,\leq\, C_d (\ln|a|)^{1/2} \,,
\end{equation}
and whenever matrices
{\scriptsize $\bigg[\!\!\begin{array}{cc} a_j & \!\!\!\overline{b_j} \\ b_j & \!\!\!\overline{a_j} \end{array}\!\!\bigg]$},
{\scriptsize $\bigg[\!\!\begin{array}{cc} A_k & \!\!\!\overline{B_k} \\ B_k & \!\!\!\overline{A_k} \end{array}\!\!\bigg]$}
$\in\mathrm{SU}(1,1)$, $j,k=0,1,\ldots,d\!-\!1$ satisfy
\begin{equation}\label{vkmatrices}
\left[\begin{array}{cc} A_k & \overline{B_k} \\ B_k & \overline{A_k}
\end{array}\right]= \prod_{j=0}^{d-1} \left[\begin{array}{cc}
a_j & \overline{b_j} \,e^{-2\pi i j k/d} \\ b_j \,e^{2\pi i j k/d} & \overline{a_j}
\end{array}\right] \,,
\end{equation}
then for any pair of conjugated exponents $1<p\leq 2$ and $2\leq q<\infty$ one has
\begin{equation}\label{vkmainpropeq2}
\Big(\frac{1}{d}\sum_{k=0}^{d-1}\beta_d(|B_k|)^{q}\Big)^{\frac{1}{q}}
\leq \Big(\sum_{j=0}^{d-1}\beta_d(|b_j|)^{p}\Big)^{\frac{1}{p}} \,.
\end{equation}
\end{proposition}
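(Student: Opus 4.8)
The plan is as follows.

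First note that a matrix in $\mathrm{SU}(1,1)$ satisfies $|a|^2=1+|b|^2$, so $(\ln|a|)^{1/2}=\big(\tfrac12\ln(1+|b|^2)\big)^{1/2}$, and (\ref{vkmainpropeq1}) merely asks that $\beta_d$ be pointwise comparable to the increasing function $t\mapsto\big(\tfrac12\ln(1+t^2)\big)^{1/2}$, i.e.\ comparable to $t$ near $0$ and to $(\ln t)^{1/2}$ near $\infty$. I would look for $\beta_d$ inside the one-parameter family
\[
\beta_d(t):=\big(\ln\cosh(c_d\operatorname{arcsinh}t)\big)^{1/2}
\]
(for an even integer $c_d$ this is $\big(\ln T_{c_d}(\sqrt{1+t^2})\big)^{1/2}$ with $T_{c_d}$ a Chebyshev polynomial; e.g.\ $c_d=2$ gives the explicit choice $\beta_d(t)=\big(\ln(1+2t^2)\big)^{1/2}$), and fix the parameter $c_d$ only at the end, once one sees what inequality (\ref{vkmainpropeq2}) actually imposes. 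Every member of this family is increasing and concave with $\beta_d(0)=0$, and satisfies (\ref{vkmainpropeq1}) with $C_d$ comparable to $c_d$; thus the whole problem reduces to (\ref{vkmainpropeq2}), and the design of this auxiliary ``swapping function'' $\beta_d$ — above all the identification of the correct $c_d$ — is the heart of the argument.

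For (\ref{vkmainpropeq2}) the plan is to peel the $d$ factors off the product (\ref{vkmatrices}) one at a time, inducting on how many of them have been allowed to differ from the identity. Writing $\zeta=e^{2\pi i/d}$ and letting $(A'_k,B'_k)$ denote the entries obtained from the product of the first $d-1$ factors, the last factor contributes $B_k=B'_k\,a_{d-1}+\overline{A'_k}\,b_{d-1}\,\zeta^{(d-1)k}$, with $|A'_k|^2=1+|B'_k|^2$ for every $k$. Consequently the induction closes as soon as one has the following single-step inequality: whenever $|C_k|^2=1+|c_k|^2$ for all $k$, $|E|^2=1+|e|^2$, and $D_k:=c_k\,E+\overline{C_k}\,e\,\zeta^{mk}$ for a fixed $m$, one has
\[
\Big(\tfrac1d\sum_{k=0}^{d-1}\beta_d(|D_k|)^{q}\Big)^{1/q}\le\Big(\Big(\tfrac1d\sum_{k=0}^{d-1}\beta_d(|c_k|)^{q}\Big)^{p/q}+\beta_d(|e|)^{p}\Big)^{1/p}
\]
for conjugate exponents $1<p\le2$, $2\le q<\infty$; the base case (no factor present) is trivial, and iterating — using $\beta_d(0)=0$ to pad with trivial data — recovers (\ref{vkmainpropeq2}).

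The single-step inequality is the crux, and closing it with the constant exactly $1$ and uniformly in $p\in(1,2]$ is what I expect to be the main obstacle. I would split according to the size of $e$. For small $e$ one expands $\beta_d(|D_k|)$ to second order in $e$; the leading correction to $\tfrac1d\sum_k\beta_d(|D_k|)^q$ is, as a function of $k$, a Fourier coefficient on $\mathbb{Z}_d$ at frequency $m$, and it is precisely the linear Hausdorff--Young inequality on $\mathbb{Z}_d$,
\[
\Big(\tfrac1d\sum_{k=0}^{d-1}|\widehat g(k)|^{q}\Big)^{1/q}\le\Big(\sum_{j=0}^{d-1}|g(j)|^{p}\Big)^{1/p},
\]
whose case $p=q=2$ is Parseval on $\mathbb{Z}_d$, that controls it — here this inequality plays the role of the ``cancellation identities'' of \cite{MTT}. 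The quadratic and higher nonlinear corrections, and the gap between $\beta_d$ and its tangent line, are then absorbed into the slack furnished by the strict concavity of $\beta_d$; quantitatively, balancing these terms forces (and is essentially equivalent to) a pointwise inequality such as $(1+t^2)(\beta_d^2)''(t)\le(\beta_d^2)''(0)$, and it is exactly this, together with the logarithmic growth of $\beta_d$ at infinity, that selects $c_d$. For large $e$ one uses instead the subadditivity $\beta_d(xy)^2\lesssim\beta_d(x)^2+\beta_d(y)^2$ coming from that logarithmic growth, together with $|a|^2=1+|b|^2$ and the plain multiplicativity of the matrix product, which keeps the exponential growth of the product from defeating the estimate; $c_d$ is chosen so that the two regimes overlap and carry constant $\le1$ in each.

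Finally, the endpoints $p=1$, $q=\infty$ are deliberately left out of Proposition \ref{vkmainprop}: in the proof of Theorem \ref{vkmaintheorem} they are disposed of separately by a Riemann--Lebesgue-type estimate. And because the transformation is genuinely nonlinear there is no interpolation at our disposal, which is why the linear Hausdorff--Young inequality on $\mathbb{Z}_d$ must be invoked at every exponent rather than only at $p=2$; keeping the constant in (\ref{vkmainpropeq2}) equal to $1$, rather than a quantity depending on $d$, is what makes the inequality iterable in the monotonicity argument for the theorem.
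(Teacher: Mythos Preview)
Your peeling induction cannot close, because the single-step inequality you isolate is false as stated---not only for your family $\beta_d(t)=(\ln\cosh(c_d\mathop{\mathrm{arsinh}}t))^{1/2}$ but for every $\beta_d$ obeying \eqref{vkmainpropeq1}. Take $p=q=2$, $d=2$, $m=1$, and real data $c_0=c$, $c_1=-c$, $C_0=C_1=\sqrt{1+c^2}$, $E=\sqrt{1+e^2}$, $e>0$; then $|D_0|=|D_1|=c\sqrt{1+e^2}+e\sqrt{1+c^2}=\sinh(u+v)$ with $u=\mathop{\mathrm{arsinh}}c$, $v=\mathop{\mathrm{arsinh}}e$, and your inequality becomes $\beta_d(\sinh(u+v))^2\le\beta_d(\sinh u)^2+\beta_d(\sinh v)^2$. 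For your family this reads $\cosh(c_d(u+v))\le\cosh(c_d u)\cosh(c_d v)$, which the addition law violates for all $u,v>0$; and since \eqref{vkmainpropeq1} forces $\beta_d(t)\sim\mathrm{const}\cdot t$ as $t\to0$, any admissible $\beta_d$ gives $\beta_d(\sinh(u+v))^2\approx(u+v)^2>u^2+v^2$ for small $u,v$. You might object that such $c_k$ do not literally occur in the induction, but you have stated the step for arbitrary $(c_k,C_k)$, and you give no indication of what extra structure of the partial products would rescue it. The appeal to Hausdorff--Young at each peel is the conceptual mis-step: a single peel introduces only one character $\zeta^{mk}$, so the first-order-in-$e$ correction to $\tfrac1d\sum_k\beta_d(|D_k|)^q$ is a \emph{single} Fourier coefficient of some $k$-dependent weight, and bounding one coefficient yields nothing toward the $\ell^p$--$\ell^q$ balance you need. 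The DFT structure in \eqref{vkmatrices} appears only when all $d$ factors are linearized simultaneously, as $B'_k=\sum_j b'_j\,\zeta^{jk}$; a one-factor-at-a-time argument destroys exactly this.

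The paper's route is accordingly different in kind. It expands the full product at once, applies Lemma~\ref{vklemmahy} to the linear part $B'_k$ (or to a variant $B''_k$ adapted to one dominant $|b_m|$), and shows that the nonlinear remainder, being at least cubic in the $b_j$'s, is controlled by the \emph{second} largest $|b_{m^\ast}|$. This forces a three-way case split---all $|b_j|$ below a threshold $t_d\sim d^{-5}$; exactly one above; at least two above---and a piecewise $\beta_d$, equal to $te^{-t}$ for $t\le t_d$ and to $(2d)^{-5}\sqrt{1+\mathop{\mathrm{arsinh}}t}$ for $t>t_d$, engineered so that in the small regime the slack from strict concavity of $te^{-t}$ dominates the cubic error, while in the large regime submultiplicativity of the $\mathrm{SU}(1,1)$ operator norm takes over. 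A single smooth one-parameter family of the type you propose does not appear capable of serving both mechanisms with constant exactly $1$.
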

This proposition is proved in the next section, by giving an
explicit construction of $\beta_d$.
The construction might seem a bit tedious, but we have to satisfy (\ref{vkmainpropeq2})
with the exact constant at most $1$, since we will be repeatedly applying that inequality
in the proof of Theorem \ref{vkmaintheorem}.
Iterating an inequality with a constant $C>1$ would not yield an estimate independent of the number of scales.

A consequence of Lemma \ref{vklemmamtt2} and (\ref{vkmainpropeq2}) is that for
$P_0,\ldots,P_{d-1},Q_0,\ldots,Q_{d-1}$ as above we get
\begin{equation}\label{vkmaineq}
\Big(\frac{1}{d}\sum_{k=0}^{d-1}\beta_d(|b_{Q_k}|)^{q}\Big)^{\frac{1}{q}}
\leq \Big(\sum_{j=0}^{d-1}\beta_d(|b_{P_j}|)^{p}\Big)^{\frac{1}{p}}.
\end{equation}

\begin{proof}[Proof of Theorem \ref{vkmaintheorem} assuming Proposition \ref{vkmainprop}]\ \\
We can consider $1<p\leq 2$, as for $p=1$ the estimate is an immediate consequence of
Gronwall's inequality.
Fix a positive integer $N$ (large enough) so that $f$ is supported
in $[0,d^{N})$. In all of the following we consider only those
tiles $I\times\omega$ that are subsets of
$[0,d^{N})\times[0,d^{N})$.
For any $n\in\mathbb{Z}$, $-N\leq n\leq N$ consider the following quantity:
$$ \mathcal{B}_n := \Bigg(\sum_{|I|=d^n}\Big(d^{-n}\!\!\!
\sum_{|\omega|=d^{-n}}\beta_d(|b_{I\times\omega}|)^q\Big)^{\frac{p}{q}}\Bigg)^{\frac{1}{p}}. $$
In words, we consider all tiles $P$ of type $d^{n}\times d^{-n}$,
then we take normalized $\ell^q$-norm of numbers $\beta_d(|b_P|)$ for
all tiles in the same column, and finally we take $\ell^p$-norm of
those numbers over all columns.
Let us first prove that this quantity is decreasing in $n$.
\begin{align*}
\mathcal{B}_{n+1}^p
& = \sum_{|I|=d^{n+1}}\Big(d^{-n}\!\!\!\sum_{|\omega|=d^{-n}}d^{-1}\!\!\!\!\!\!
\sum_{\scriptsize\begin{array}{c}\omega'\subseteq\omega \\|\omega'|=d^{-n-1}\end{array}}
\!\!\!\!\!\beta_d(|b_{I\times\omega'}|)^q\Big)^{\frac{p}{q}} \\
& \textrm{using (\ref{vkmaineq}) for the multitile $I\times\omega$} \\
& \leq \sum_{|I|=d^{n+1}}\bigg(d^{-n}\!\!\!\sum_{|\omega|=d^{-n}}
\Big(\!\!\sum_{\scriptsize\begin{array}{c}I'\subseteq I\\
|I'|=d^{n}\end{array}}
\beta_d(|b_{I'\times\omega}|)^p\Big)^{\frac{q}{p}}\bigg)^{\frac{p}{q}} \\
& \textrm{using Minkowski's inequality,\, since $q/p\geq 1$} \\
& \leq \sum_{|I|=d^{n+1}}\sum_{\scriptsize\begin{array}{c}I'\subseteq I\\|I'|=d^{n}\end{array}}
\Big(d^{-n}\!\!\!\sum_{|\omega|=d^{-n}}\beta_d(|b_{I'\times\omega}|)^q\Big)^{\frac{p}{q}}
= \mathcal{B}_n^p
\end{align*}
Furthermore, when $n=-N$ we have:
\begin{align*}
\mathcal{B}_{-{N}} = \Big(\sum_{|I|=d^{-N}}(d^{N})^{\frac{p}{q}}\,\,\beta_d(|b_{I\times[0,d^N)}|)^p\Big)^{\frac{1}{p}}
\leq C_d (d^{N})^{\frac{1}{q}} \Big(\sum_{|I|=d^{-{N}}} (\ln|a_I(0)|)^{\frac{p}{2}}\Big)^{\frac{1}{p}} & \\
\leq C_d (d^{N})^{\frac{1}{q}} \Big(\sum_{|I|=d^{-{N}}} \|f\mathbf{1}_I\|^p_{\mathrm{L}^1} \Big)^{\frac{1}{p}}
\leq C_d \|f\|_{\mathrm{L}^p} & \,.
\end{align*}
Here we have applied the trivial $\mathrm{L}^1$--$\mathrm{L}^\infty$ estimate
$(\ln|a_I(0)|)^{1/2}\leq\|f\mathbf{1}_I\|_{\mathrm{L}^1}$ and H\"{o}lder's inequality
$\|f\mathbf{1}_I\|_{\mathrm{L}^1}\leq\|f\mathbf{1}_I\|_{\mathrm{L}^p}\|\mathbf{1}_I\|_{\mathrm{L}^q}$.
On the other hand, for $n=N$ we have:
\begin{align*}
\mathcal{B}_{N} = \Big(d^{-N}\!\!\sum_{|\omega|=d^{-N}}\beta_d(|b_{[0,d^{N})\times\omega}|)^q\Big)^{\frac{1}{q}}
\geq C_d^{-1} \Big(d^{-N}\!\!\sum_{|\omega|=d^{-{N}}}(\ln|a(\xi_\omega)|)^\frac{q}{2}\Big)^{\frac{1}{q}} & \\
= C_d^{-1}\Big(\int_{0}^{d^{N}}\!\!\!(\ln|a(\xi)|)^\frac{q}{2}\,d\xi\Big)^{\frac{1}{q}} & \,.
\end{align*}
Above $\xi_\omega$ denotes the left endpoint of $\omega$ and we
have used that $\xi\mapsto |a(\xi)|$ is constant on intervals
of length $d^{-N}$, by Lemma \ref{vklemmamtt1}.
From the monotonicity of $(\mathcal{B}_n)$ we conclude:
$$ \Big(\int_{0}^{d^{N}}\!\!\!(\ln|a(\xi)|)^\frac{q}{2} \,d\xi\Big)^{\frac{1}{q}}
\leq C_d\mathcal{B}_{N} \leq C_d\mathcal{B}_{-N} \leq C_d^2 \|f\|_{\mathrm{L}^p} \,, $$
and by taking $\lim_{N\to\infty}$ we deduce the theorem.
\end{proof}

\section{The swapping inequality}

This technical section is devoted to the proof of Proposition \ref{vkmainprop}.
An arbitrary function on $\mathbb{Z}_d$ can be presented as a complex
$d$-tuple $(z_0,z_1,\ldots,z_{d-1})$. Its Fourier transform is the
$d$-tuple $(Z_0,Z_1,\ldots,Z_{d-1})$ given by
$$ Z_k := \sum_{j=0}^{d-1} z_j \,e^{2\pi i j k/d} \,. $$
\begin{lemma}\label{vklemmahy}
For a pair of conjugated exponents $1< p\leq 2$ and $2\leq q< \infty$
and $(z_j)$, $(Z_k)$ as above, one has
$$ \Big(\frac{1}{d}\sum_{k=0}^{d-1}|Z_k|^q\Big)^{\frac{1}{q}} \leq \Big(\sum_{j=0}^{d-1}|z_j|^p\Big)^{\frac{1}{p}} . $$
\end{lemma}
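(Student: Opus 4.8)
The plan is to recognize Lemma~\ref{vklemmahy} as the classical Hausdorff--Young inequality on the finite abelian group $\mathbb{Z}_d$ and to prove it by Riesz--Thorin interpolation between two elementary endpoint estimates. First I would set the stage: view the map $T\colon(z_j)_{j=0}^{d-1}\mapsto(Z_k)_{k=0}^{d-1}$ as a linear operator, equip the domain copy of $\mathbb{Z}_d$ with the counting measure and the target copy of $\mathbb{Z}_d$ with the uniform probability measure $\mu$ (i.e.\ the normalized counting measure). In this language the claimed bound is exactly $\|Tz\|_{\mathrm{L}^q(\mu)}\le\|z\|_{\ell^p}$, so that the factor $1/d$ on the left-hand side has been absorbed into $\mu$.

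Next I would verify the two endpoints, both with constant $1$. For $p=q=2$, Parseval's identity on $\mathbb{Z}_d$ gives $\sum_{k=0}^{d-1}|Z_k|^2=d\sum_{j=0}^{d-1}|z_j|^2$, i.e.\ $\|Tz\|_{\mathrm{L}^2(\mu)}=\|z\|_{\ell^2}$, so the inequality holds with equality. For $p=1$, $q=\infty$, the triangle inequality gives $|Z_k|\le\sum_{j=0}^{d-1}|z_j|$ for every $k$, hence $\|Tz\|_{\mathrm{L}^\infty(\mu)}\le\|z\|_{\ell^1}$.

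Finally I would apply the Riesz--Thorin theorem, in the version permitting different (and non-probability) measures on the domain and the target, to the operator $T$ with these two endpoints. Choosing $\theta\in[0,1]$ and setting $\tfrac1p=\tfrac{1-\theta}{1}+\tfrac\theta2$ and $\tfrac1q=\tfrac{1-\theta}{\infty}+\tfrac\theta2$ yields $\|Tz\|_{\mathrm{L}^q(\mu)}\le\|z\|_{\ell^p}$; one checks that $\tfrac1p+\tfrac1q=1$, so $p$ and $q$ are conjugated, and as $\theta$ runs over $[0,1]$ the exponent $p$ runs over $[1,2]$, covering the stated range. I do not anticipate any genuine obstacle here, since this is a textbook fact; the only point demanding a little care is the bookkeeping of the normalization $1/d$, which is handled cleanly by placing the probability measure $\mu$ on the target so that the interpolated constant stays equal to $1$ --- and that exact constant is precisely what later makes the iteration in Proposition~\ref{vkmainprop} and Theorem~\ref{vkmaintheorem} possible.
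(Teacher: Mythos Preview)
Your proposal is correct and follows essentially the same route as the paper: verify the trivial $p=1$, $q=\infty$ endpoint via the triangle inequality, verify the $p=q=2$ endpoint via Plancherel/orthonormality of characters, and then interpolate using Riesz--Thorin to obtain constant $1$ for all intermediate $1<p\le 2$. Your explicit handling of the $1/d$ normalization by placing the uniform probability measure on the target is a clean way to keep the interpolated constant equal to $1$, which is exactly what the paper needs downstream.
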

Lemma \ref{vklemmahy} is a particular consequence of the general theory of the Fourier transform
on locally compact abelian groups (see \cite{F}).
Indeed, one observes that the (non-stated) case $p=1$ is trivial from the triangle inequality,
while for $p=2$ we indeed have an equality that follows from orthonormality of group characters.
Intermediate cases are deduced by interpolating these two endpoint ones using the Riesz-Thorin theorem,
since the transformation $(z_j)\mapsto(Z_k)$ is linear.

\smallskip
For any integer $d\geq 2$\, let $t_d$ be the unique solution of
the equation
$$ t e^{-t} = (2d)^{-5}\sqrt{1 + \mathop{\mathrm{arsinh}} t}  $$
that lies in $[0,1]$. One can easily see
\begin{equation}\label{vkboundsfortn}
2^{-5}d^{-5} < t_d < 2^{-4}d^{-5} \,,
\end{equation}
and indeed \ $t_d = (2d)^{-5} + \frac{3}{2}(2d)^{-10} +
\mathrm{O}(d^{-15})$ \ as $d\to\infty$, \ but we do not need
bounds on $t_d$ that are more precise than (\ref{vkboundsfortn}).

Now we define $\beta_d\colon[0,\infty)\to[0,\infty)$ by the formula
$$ \beta_d(t):=\left\{\begin{array}{cl}
t e^{-t}, & \textrm{for } t\leq t_d \,, \\[2mm]
(2d)^{-5}\sqrt{1 + \mathop{\mathrm{arsinh}} t}, &  \textrm{for } t> t_d \,.
\end{array}\right. $$
\begin{figure}[htbp]
\includegraphics[width=0.75\textwidth]{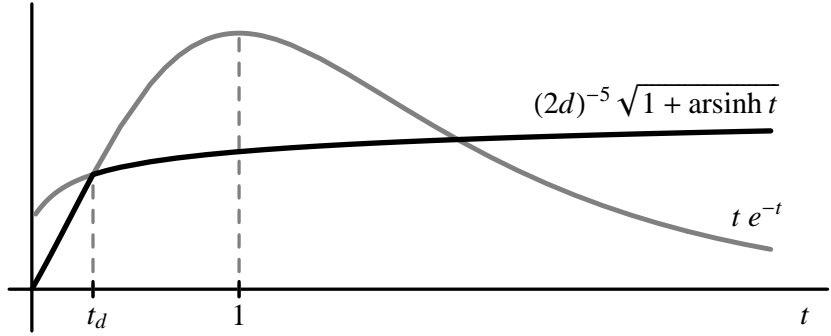}
\caption{Graph of $\beta_d$. Not drawn to scale.}
\label{vkfigurebeta}
\end{figure}

\smallskip\noindent
Using only basic calculus, one can easily establish the following properties of $\beta_d$:
\begin{align}
\label{vkbetaineq}
& 2^{-6} d^{-5} \sqrt{\ln(1+t^2)} \ \, \leq \ \beta_d(t) \ \leq \ 2\, \sqrt{\ln(1+t^2)} \,, \\
\label{vkbetaineq2}
& \beta_d(t) \ \leq \ t e^{-t}, \qquad \textrm{for }0\leq t\leq 1 \,, \\
\label{vkbetaineq3}
& \beta_d(t) \ \leq \ (2d)^{-5} \,\sqrt{1 + \mathop{\mathrm{arsinh}} t}, \qquad \textrm{for any }t\geq 0 \,.
\end{align}
Since (\ref{vkbetaineq}) is exactly (\ref{vkmainpropeq1}),
it is enough to verify (\ref{vkmainpropeq2}).

\smallskip
By performing matrix multiplication in (\ref{vkmatrices}), one can
write $B_k$ explicitly as a sum of $2^{d-1}$ terms of the form
\begin{align*}
& \overline{a_{0}}\ldots\overline{a_{j_1-1}}b_{j_1}a_{j_1+1}\ldots a_{j_2-1}\overline{b_{j_2}}
\overline{a_{j_2+1}}\ldots\overline{a_{j_3-1}}b_{j_3}a_{j_3+1}\ldots \\
& \ldots a_{j_{2r}-1}\overline{b_{j_{2r}}}
\overline{a_{j_{2r}+1}}\ldots\overline{a_{j_{2r+1}-1}}b_{j_{2r+1}}a_{j_{2r+1}+1}\ldots a_{d-1}
\cdot e^{(2\pi i k/d)(j_1-j_2+j_3-\ldots-j_{2r}+j_{2r+1})} &
\end{align*}
where the summation is taken over all integers $0\leq
r\leq\lfloor\frac{d-1}{2}\rfloor$ and over all possible choices of
indices $0\leq j_1<j_2<\ldots<j_{2r+1}\leq d\!-\!1$. In
particular, observe that each term contains an odd number of
$b$'s. Terms that contain exactly one of the $b$'s could be called
\emph{linear terms}, and so the ``linear part'' of $B_k$ is
$$ B'_k :=  \sum_{j=0}^{d-1} \overline{a_0 a_1 \ldots a_{j-1}} b_j a_{j+1} \ldots a_{d-1} \,e^{2\pi i j k/d},
\quad\textrm{for } k=0,1,\ldots,d\!-\!1 . $$
Other terms in $B_k$ are called \emph{nonlinear terms}. Observe that Lemma \ref{vklemmahy} gives
\begin{equation}\label{vklineartermshy}
\Big(\frac{1}{d}\sum_{k=0}^{d-1}|B'_k|^q\Big)^{\frac{1}{q}} \leq
\Big(\sum_{j=0}^{d-1}|b'_j|^p\Big)^{\frac{1}{p}} \,,
\end{equation}
where $b'_j := \overline{a_0 \ldots a_{j-1}} b_j a_{j+1} \ldots a_{d-1}$.
In the case when some $|b_m|$ is ``large'' and all other $|b_j|$, $j\neq m$ are ``small'' we find the following
variant more useful:
\begin{align*}
B''_k \ := & \ \sum_{j=0}^{m-1} \overline{c_0\ldots c_{j-1}} b_j c_{j+1}\ldots
c_{m-1} a_m c_{m+1}\ldots c_{d-1} \,e^{2\pi i j k/d} \\
+ & \ \overline{c_0 \ldots c_{m-1}} b_m c_{m+1} \ldots c_{d-1} \,e^{2\pi i m k/d} \\
+ & \sum_{j=m+1}^{d-1} \overline{c_0\ldots c_{m-1} a_m
c_{m+1}\ldots c_{j-1}} b_j c_{j+1}\ldots c_{d-1} \,e^{2\pi i j k/d} \,,
\end{align*}
where we have denoted $c_j:=a_j/|a_j|$. This time Lemma \ref{vklemmahy} implies
\begin{equation}\label{vklineartermshyvar2}
\Big(\frac{1}{d}\sum_{k=0}^{d-1}|B''_k|^q\Big)^{\frac{1}{q}} \
\leq \ \Big(|b_m|^p + |a_m|^p\sum_{j\neq m}|b_j|^p\Big)^{\frac{1}{p}} .
\end{equation}

\smallskip
The proof strategy is to compare $B_k$ to $B'_k$ or $B''_k$ by estimating nonlinear terms,
and then use inequalities (\ref{vklineartermshy}) or (\ref{vklineartermshyvar2}).
As we will soon see, $\beta_d$ is carefully chosen so that it compensates for the perturbation caused by nonlinear terms.

Choose indices $m,m^\ast\in\{0,\ldots,d\!-\!1\}$ such that $|b_m|$
is the largest among the numbers $|b_j|$, and $|b_{m^\ast}|$ is
the largest among the numbers $|b_j|$; $j\neq m$, i.e.\@ the
second largest among $|b_j|$. We distinguish the following three cases.

\medskip
\noindent\emph{Case 1.} \ $|b_j|\leq t_d$ for every $j$.

\smallskip
Recall that $|a_j|^2-|b_j|^2=1$, which implies $|a_j|\leq
1+|b_j|\leq 1+t_d$. We begin with a rough estimate obtained using (\ref{vkboundsfortn}):
$$ |B_k| \ \leq \ \sum_{j=0}^{d-1} |b_j| \Big(\prod_{l\neq j}(|a_l|+|b_l|)\Big)
\ \leq \ d t_d (1+2t_d)^{d-1} \,\leq\, 2^{-3}d^{-4}, $$ which
guarantees $|B_k|\leq 1$, and thus
\,$\beta_d(|B_k|)\leq |B_k| e^{-|B_k|}$ by (\ref{vkbetaineq2}).
Therefore it is enough to prove
$$ \Big(\frac{1}{d}\sum_{k=0}^{d-1}|B_k|^{q} e^{-q|B_k|}\Big)^{\frac{1}{q}}
\leq \Big(\sum_{j=0}^{d-1}|b_j|^{p} e^{-p|b_j|}\Big)^{\frac{1}{p}} \,. $$

\begin{lemma}\label{vkaux1}
\begin{align}
\label{vkaux1eq1} & \Big(\frac{1}{d}\sum_{k=0}^{d-1}|B_k|^{q}
e^{-q|B_k|}\Big)^{\frac{1}{q}} \leq
\Big(\frac{1}{d}\sum_{k=0}^{d-1}|B'_k|^{q}
e^{-q|B'_k|}\Big)^{\frac{1}{q}}
+ 2^{-3}d^{-2} |b_{m^\ast}|^2 \\
\label{vkaux1eq2} & \Big(\frac{1}{d}\sum_{k=0}^{d-1}|B'_k|^{q}
e^{-q|B'_k|}\Big)^{\frac{1}{q}}
\leq \|b'\|_{\ell^p} \, e^{-\|b'\|_{\ell^p}} \\
\label{vkaux1eq3} &
\|b'\|_{\ell^p} \, e^{-\|b'\|_{\ell^p}} \leq
\Big(\sum_{j=0}^{d-1}|b_j|^{p} e^{-p|b_j|}\Big)^{\frac{1}{p}} -
2^{-3}d^{-2} |b_{m^\ast}|^2
\end{align}
Here we have denoted
\ $\|b'\|_{\ell^p} := \big(\sum_{j=0}^{d-1}|b'_j|^{p}\big)^{1/p}$.
\end{lemma}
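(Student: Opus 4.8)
The plan is to prove the three displayed inequalities one at a time; chaining them (the two additive copies of $2^{-3}d^{-2}|b_{m^\ast}|^2$ cancel) yields precisely the weighted estimate $\big(\tfrac1d\sum_k|B_k|^qe^{-q|B_k|}\big)^{1/q}\le\big(\sum_j|b_j|^pe^{-p|b_j|}\big)^{1/p}$ needed to close Case~1. Write $\varphi(t):=te^{-t}$ and $\delta:=2^{-3}d^{-2}|b_{m^\ast}|^2$. I will use throughout that in Case~1 every quantity involved --- the $|b_j|$, the $|b'_j|$, $\|b'\|_{\ell^p}$ and all the $|B_k|,|B'_k|$ --- is of size $\mathrm{O}(d^{-4})$, hence lies in $[0,1)$; note also that $\varphi$ is $1$-Lipschitz on $[0,\infty)$ (indeed $|\varphi'|\le1$).

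For \eqref{vkaux1eq1} I would expand the product in \eqref{vkmatrices}: the nonlinear remainder $B_k-B'_k$ is the sum, over $r\ge1$ and index choices $0\le j_1<\dots<j_{2r+1}\le d-1$, of terms of modulus at most $\big(\prod_i|b_{j_i}|\big)\prod_{l\notin\{j_1,\dots,j_{2r+1}\}}|a_l|$. Since at most one of the $\ge3$ indices $j_i$ equals $m$, at least two of the factors $|b_{j_i}|$ are $\le|b_{m^\ast}|$ and the remaining ones are $\le t_d$; combining this with $|a_l|=\sqrt{1+|b_l|^2}\le1+t_d$, \eqref{vkboundsfortn}, and $\sum_{r\ge1}\binom d{2r+1}t_d^{\,2r-1}<\tfrac13 d^3t_d$, one gets $|B_k-B'_k|\le\delta$ for every $k$. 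As $\varphi$ is $1$-Lipschitz this gives $\varphi(|B_k|)\le\varphi(|B'_k|)+\delta$, and the triangle inequality in the normalized $\ell^q$ space yields \eqref{vkaux1eq1}. For \eqref{vkaux1eq2} I would observe that $B'_k=\sum_j b'_j\,e^{2\pi ijk/d}$ is the Fourier transform of $(b'_j)$, so Lemma~\ref{vklemmahy} gives $\tfrac1d\sum_k|B'_k|^q\le\|b'\|_{\ell^p}^q$. A direct computation shows that $\psi(u):=u\,e^{-qu^{1/q}}$ is increasing and concave on $[0,1]$, since $\psi'(u)=e^{-qu^{1/q}}\big(1-u^{1/q}\big)$ and $\psi''(u)=-\tfrac1q u^{1/q-1}e^{-qu^{1/q}}\big(q(1-u^{1/q})+1\big)<0$ on $(0,1)$. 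Because $\psi(|B'_k|^q)=\varphi(|B'_k|)^q$ and all arguments lie in $[0,1]$, Jensen's inequality and the monotonicity of $\psi$ give $\tfrac1d\sum_k\varphi(|B'_k|)^q=\tfrac1d\sum_k\psi(|B'_k|^q)\le\psi\!\big(\tfrac1d\sum_k|B'_k|^q\big)\le\psi\big(\|b'\|_{\ell^p}^q\big)=\varphi(\|b'\|_{\ell^p})^q$, which is \eqref{vkaux1eq2}.

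The substantial step is \eqref{vkaux1eq3}, which must hold with no room to spare. Set $\mu:=|b_m|$, $\nu:=|b_{m^\ast}|$, $T:=\sum_{j\ne m}|b_j|^p$, and write $\|b\|_{\ell^p}$ for $(\sum_j|b_j|^p)^{1/p}$ (the case $\|b\|_{\ell^p}=0$ being trivial). Two elementary estimates drive the argument: from $|b'_j|^p=|b_j|^p\prod_{l\ne j}(1+|b_l|^2)^{p/2}$, expanding and isolating the $j=m$ term, one gets $\|b'\|_{\ell^p}^p-\|b\|_{\ell^p}^p\le 2p\mu^2 T+pd\nu^2\|b\|_{\ell^p}^p$; and, by concavity of $x\mapsto x^{1/p}$ applied to $\|b\|_{\ell^p}^p=\mu^p+T$, one gets $\|b\|_{\ell^p}-\mu\ge\tfrac1p\|b\|_{\ell^p}^{1-p}T$. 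Feeding these in --- together with $\|b'\|_{\ell^p}\ge\|b\|_{\ell^p}$, the trivial bound $\sum_j\varphi(|b_j|)^p\ge e^{-p\mu}\|b\|_{\ell^p}^p$, and the mean value theorem for $x\mapsto x^p$ --- reduces the target inequality $\varphi(\|b'\|_{\ell^p})+\delta\le\big(\sum_j\varphi(|b_j|)^p\big)^{1/p}$ to checking
\[
\tfrac12\|b\|_{\ell^p}\big(\|b\|_{\ell^p}-\mu\big)-\frac{2\mu^2 T}{\|b\|_{\ell^p}^{\,p-1}}-d\nu^2\|b\|_{\ell^p}\ \ge\ \delta .
\]
Using $\mu^2\le t_d\|b\|_{\ell^p}$ and $\tfrac1{2p}\ge\tfrac14$, the first two terms are $\ge\tfrac18\|b\|_{\ell^p}^{\,2-p}T\ge\tfrac18\nu^2$ (since $\|b\|_{\ell^p}\ge\mu\ge\nu$ and $T\ge\nu^p$), while $d\nu^2\|b\|_{\ell^p}\le d^2t_d\nu^2<2^{-7}\nu^2$ and $\delta\le2^{-5}\nu^2$; as $2^{-5}\le\tfrac18-2^{-7}$, this closes \eqref{vkaux1eq3}.

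The main obstacle is exactly \eqref{vkaux1eq3}: the two ``defects'' on its left --- the excess of $\varphi(\|b'\|_{\ell^p})$ over $\varphi(\|b\|_{\ell^p})$ coming from $|a_l|\ge1$, and the additive $\delta$ --- must be absorbed into the ``concavity gain'' $\big(\sum_j\varphi(|b_j|)^p\big)^{1/p}-\varphi(\|b\|_{\ell^p})$, and bounding these two features separately is too crude: it already breaks down when $b$ is supported in a single coordinate, where $\|b'\|_{\ell^p}=\|b\|_{\ell^p}$, $\nu=0$, and both sides of \eqref{vkaux1eq3} coincide. The estimates therefore have to be run jointly and to the correct order, which is where the smallness of $t_d$ built into $\beta_d$ is used.
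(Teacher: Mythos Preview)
Your argument is correct. For \eqref{vkaux1eq1} and \eqref{vkaux1eq2} you follow exactly the paper's proof: the same nonlinear-term estimate $|B_k-B'_k|\le 2^{-3}d^{-2}|b_{m^\ast}|^2$ (organized slightly differently but equivalent), the $1$-Lipschitz bound for $te^{-t}$, Minkowski, and then Jensen applied to the increasing concave function $u\mapsto u\,e^{-qu^{1/q}}$ combined with the linear Hausdorff--Young inequality \eqref{vklineartermshy}.

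For \eqref{vkaux1eq3} your route differs from the paper's in a genuine (though minor) way. The paper reduces to the pointwise comparison
\[
|b_j|^{p} e^{-p|b_j|} - |b'_j|^p e^{-p\|b\|_{\ell^p}} \ \geq\ \tfrac{1}{2}|b_j|^{p}\,\Big(p\|b\|_{\ell^p}-p|b_j|-\textstyle\prod_{l\neq j}|a_l|^p +1\Big),
\]
shows each bracket is nonnegative, and extracts the entire gain from the single index $j=m$. You instead throw away the pointwise structure with the crude bound $\sum_j\varphi(|b_j|)^p\ge e^{-p\mu}\|b\|_{\ell^p}^p$ and recover the needed gain globally from the gap $e^{-\mu}\|b\|_{\ell^p}-e^{-\|b\|_{\ell^p}}\|b'\|_{\ell^p}$, reducing to a single scalar inequality in $\mu,\nu,T,\|b\|_{\ell^p}$. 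Both arguments rest on the same three estimates (the mean-value bound $\|b\|_{\ell^p}-\mu\ge\frac1p\|b\|_{\ell^p}^{1-p}T$, the bound on $\|b'\|_{\ell^p}^p-\|b\|_{\ell^p}^p$ coming from $\prod_{l\ne j}|a_l|^p-1=O(\sum_{l\ne j}|b_l|^2)$, and the smallness of $t_d$), so the difference is organizational rather than conceptual; your reduction is somewhat quicker, while the paper's termwise bookkeeping makes it more transparent that the only term contributing a strictly positive surplus is $j=m$. One small slip: the mean value theorem you invoke in the reduction is for $x\mapsto x^{1/p}$, not for $x\mapsto x^p$.
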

The desired inequality is obtained simply by adding the three estimates above.

\begin{proof}[Proof of Lemma \ref{vkaux1}]
We start by showing (\ref{vkaux1eq1}).
Since $B_k-B'_k$ contains only nonlinear terms and these have at least $3$ $b$'s,
we have the following error estimate:
\begin{align*}
|B_k - B'_k| & \leq \sum_{j_1<j_2<j_3} |b_{j_1}| |b_{j_2}| |b_{j_3}| \Big(\prod_{l\neq j_1,j_2,j_3}(|a_l|+|b_l|)\Big) \\
& \leq d^3 |b_m| |b_{m^\ast}|^2 (1+2t_d)^{d-3} \leq
2^{-3}d^{-2}|b_{m^\ast}|^2 .
\end{align*}
(For $d=2$ this difference is $0$.) By the mean value theorem for $t e^{-t}$:
$$ \Big| |B_k| e^{-|B_k|}-|B'_k| e^{-|B'_k|} \Big|\leq  2^{-3}d^{-2}|b_{m^\ast}|^2 , $$
and it remains to use Minkowski's inequality.

In order to prove (\ref{vkaux1eq2}) we consider the function
$\varphi(t):= t e^{-q t^{1/q}}$,
which is increasing and concave on $[0,1]$ since:
$$ \varphi'(t)=e^{-q t^{1/q}}(1-t^{1/q})>0, \quad
\varphi''(t)=\frac{1}{q}e^{-q t^{1/q}}t^{1/q-1}(-1-q+q t^{1/q})<0, $$
for $0<t<1$.
Now (\ref{vkaux1eq2}) follows using Jensen's inequality and (\ref{vklineartermshy}):
$$ \frac{1}{d}\sum_{k=0}^{d-1}\varphi(|B'_k|^q) \leq
\varphi\Big(\frac{1}{d}\sum_{k=0}^{d-1}|B'_k|^q\Big) \leq
\varphi\bigg(\Big(\sum_{j=0}^{d-1}|b'_j|^p\Big)^{\frac{q}{p}}\bigg) \,. $$

To show (\ref{vkaux1eq3}), we observe that $|b'_j|\geq |b_j|$,
and thus it suffices to prove
\begin{equation}\label{vkaux1eq3ver2}
\Big(\sum_{j=0}^{d-1}|b_j|^{p} e^{-p|b_j|}\Big)^{\frac{1}{p}}-
\Big(\sum_{j=0}^{d-1}|b'_j|^p
e^{-p\,\|b\|_{\ell^p}}\Big)^{\frac{1}{p}} \geq
2^{-3}d^{-2} |b_{m^\ast}|^2.
\end{equation}
From the mean value theorem we obtain
$$ e^{-p|b_j|} - e^{-p\,\|b\|_{\ell^p}} \geq
p \,e^{-p\,\|b\|_{\ell^p}}
\big(\|b\|_{\ell^p}-|b_j|\big) \,, $$
and using \,$e^{-p\,\|b\|_{\ell^p}} \geq e^{-pd|b_m|}\geq \frac{1}{2}$\, we come to the inequality
\begin{equation}\label{vkaux1eq3add}
|b_j|^{p} e^{-p|b_j|} - |b'_j|^p e^{-p\,\|b\|_{\ell^p}} \geq \ \frac{1}{2}|b_j|^{p}
\,\Big(p\,\|b\|_{\ell^p}-p\,|b_j|-\prod_{l\neq j}|a_l|^p +1\Big).
\end{equation}
Another application of the mean value theorem, this time for the function $t^{1/p}$, gives
$$ p\,\|b\|_{\ell^p}-p|b_j| \ \geq \
d^{-1} |b_m|^{1-p}\Big(\sum_{l\neq j}|b_l|^p \Big) \,. $$
On the other hand, we estimate:
\begin{align*}
& \prod_{l\neq j}|a_l|^p -1 \leq \prod_{l\neq j}|a_l|^2 -1 = \prod_{l\neq j}(1+|b_l|^2) -1
\leq e^{\sum_{l\neq j}|b_l|^2} - 1 \\
& \leq\ e^{2^{-8} d^{-9}} \sum_{l\neq j}|b_l|^2 \leq 2
\Big(\sum_{l\neq j}|b_l|^p\Big)^{\frac{2}{p}}
\leq 2d\,|b_m|^{2-p} \Big(\sum_{l\neq j}|b_l|^p\Big) \,,
\end{align*}
to conclude for every $j\neq m$:
$$ p\,\|b\|_{\ell^p}-p\,|b_j|-\prod_{l\neq j}|a_l|^p +1 \ \geq \ 0, $$
and for $j=m$:
$$ p\,\|b\|_{\ell^p}-p\,|b_m|-\prod_{l\neq m}|a_l|^p +1
\ \geq\ 2^{-1}d^{-1} |b_m|^{1-p} |b_{m^\ast}|^p \,. $$
Now by summing
(\ref{vkaux1eq3add}) over all $j=0,\ldots,d\!-\!1$ we get
$$ \sum_{j=0}^{d-1}|b_j|^{p} e^{-p|b_j|} - \sum_{j=0}^{d-1}|b'_j|^p e^{-p\,\|b\|_{\ell^p}}
\geq 2^{-2}d^{-1}|b_m| |b_{m^\ast}|^p , $$
and then finally obtain (using the mean value theorem for $t^{1/p}$):
\begin{align*}
& \Big(\sum_{j=0}^{d-1}|b_j|^{p} e^{-p|b_j|}\Big)^{\frac{1}{p}} -
\Big(\sum_{j=0}^{d-1}|b'_j|^p e^{-p\,\|b\|_{\ell^p}}\Big)^{\frac{1}{p}} \\
& \geq \frac{1}{p}(d|b_m|^p)^{\frac{1}{p}-1} 2^{-2}d^{-1}|b_m|
|b_{m^\ast}|^p \geq 2^{-3} d^{-2} |b_m|^{2-p} |b_{m^\ast}|^p \geq
2^{-3} d^{-2} |b_{m^\ast}|^2 \,.
\end{align*}
This is exactly (\ref{vkaux1eq3ver2}), which completes the proof of Lemma \ref{vkaux1}.
\end{proof}

\medskip
\noindent\emph{Case 2.} \ $|b_m|>t_d$, but $|b_j|\leq t_d$ for
every $j\neq m$.

\smallskip
By (\ref{vkbetaineq3}) it is enough to prove
\begin{align*}
& (2d)^{-5} \Big(\frac{1}{d}\sum_{k=0}^{d-1}(1+\mathop{\mathrm{arsinh}}|B_k|)^{q/2}\Big)^{\frac{1}{q}} \\[-1mm]
& \leq \Big( (2d)^{-5p} (1+\mathop{\mathrm{arsinh}}|b_m|)^{p/2} +
\sum_{j\neq m} |b_j|^p e^{-p|b_j|} \Big)^\frac{1}{p} ,
\end{align*}
and because \ $e^{-|b_{m^\ast}|}\geq e^{-t_d}\geq\frac{1}{2}$, \
it suffices to show
\begin{equation}\label{vkcase2reduced}
\Big(\frac{1}{d}\sum_{k=0}^{d-1}(1+\mathop{\mathrm{arsinh}}|B_k|)^{q/2}\Big)^{\frac{p}{q}}
\leq (1+\mathop{\mathrm{arsinh}}|b_m|)^{p/2} + 2^{4p} d^{5p}
|b_{m^\ast}|^p \,.
\end{equation}

\begin{lemma}\label{vkaux2}
\begin{align}
\label{vkaux2eq1} &
\Big(\frac{1}{d}\sum_{k=0}^{d-1}(1+\mathop{\mathrm{arsinh}}|B_k|)^{q/2}\Big)^{\frac{p}{q}}
\leq \bigg(1+\mathop{\mathrm{arsinh}}\Big(\frac{1}{d}\sum_{k=0}^{d-1}|B_k|^q\Big)^{\frac{1}{q}}\bigg)^{\frac{p}{2}} \\
\nonumber &
\bigg(1+\mathop{\mathrm{arsinh}}\Big(\displaystyle\frac{1}{d}\sum_{k=0}^{d-1}|B_k|^q\Big)^{\frac{1}{q}}\bigg)^{\frac{p}{2}}
\leq \ \big(1+\mathop{\mathrm{arsinh}}|b_m|\big)^{p/2} \\[-3.5mm]
\label{vkaux2eq2} &
\qquad\qquad\qquad\qquad\qquad\qquad\quad
+ \frac{1}{|a_m|}\bigg(\Big(\displaystyle\frac{1}{d}\sum_{k=0}^{d-1}|B_k|^q\Big)^{\frac{1}{q}}-|b_m|\bigg) \\
\label{vkaux2eq3} &
\Big(\frac{1}{d}\sum_{k=0}^{d-1}|B_k|^q\Big)^{\frac{1}{q}} \leq
\Big(|b_m|^p \!+\! |a_m|^p\!\sum_{j\neq m}|b_j|^p\Big)^{\frac{1}{p}}
+ 2^{4p-1}d^{5p} |a_m| |b_{m^\ast}|^p \\
\label{vkaux2eq4} & \Big(|b_m|^p \!+\! |a_m|^p\!\sum_{j\neq m}|b_j|^p\Big)^{\frac{1}{p}} \leq |b_m| + 2^{4p-1}d^{5p} |a_m|
|b_{m^\ast}|^p
\end{align}
\end{lemma}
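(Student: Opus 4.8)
The plan is to chain the four estimates \eqref{vkaux2eq1}--\eqref{vkaux2eq4}. Abbreviate $s := \big(\tfrac{1}{d}\sum_{k}|B_k|^q\big)^{1/q}$. Then \eqref{vkaux2eq1} bounds the left side of \eqref{vkcase2reduced} by $(1+\operatorname{arsinh} s)^{p/2}$, \eqref{vkaux2eq2} replaces this by $(1+\operatorname{arsinh}|b_m|)^{p/2} + \tfrac{1}{|a_m|}(s-|b_m|)$, and \eqref{vkaux2eq3}--\eqref{vkaux2eq4} together give $s \le |b_m| + 2^{4p}d^{5p}|a_m||b_{m^\ast}|^p$, whence $\tfrac{1}{|a_m|}(s-|b_m|) \le 2^{4p}d^{5p}|b_{m^\ast}|^p$, which is exactly the error term in \eqref{vkcase2reduced}. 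So it remains to prove the four estimates.

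Estimate \eqref{vkaux2eq1} is proved like \eqref{vkaux1eq2}. Since $p/q \le 1$ and $\tfrac{p}{2} = \tfrac{q}{2}\cdot\tfrac{p}{q}$, raising both sides to the power $q/p \ge 1$ shows that \eqref{vkaux2eq1} is equivalent to $\tfrac{1}{d}\sum_k\psi(|B_k|^q) \le \psi\big(\tfrac{1}{d}\sum_k|B_k|^q\big)$, where $\psi(u) := (1+\operatorname{arsinh}(u^{1/q}))^{q/2}$; this is Jensen's inequality once $\psi$ is known to be concave on $[0,\infty)$. The concavity is a short calculus check: with $w = u^{1/q}$ and $v = 1+\operatorname{arsinh} w$, the sign of $\psi''(u)$ coincides with that of $\big(\tfrac{q}{2}-1\big)\tfrac{1}{v\sqrt{1+w^2}} - \tfrac{w}{1+w^2} - \tfrac{q-1}{w}$, and since $v \ge 1$ and $\sqrt{1+w^2}\ge w$ one has $\big(\tfrac{q}{2}-1\big)\tfrac{1}{v\sqrt{1+w^2}} \le \tfrac{q/2-1}{w} \le \tfrac{q-1}{w}$ (using $0\le \tfrac{q}{2}-1\le q-1$, i.e.\ $q\ge 2$), so the whole expression is $\le -\tfrac{w}{1+w^2}\le 0$; continuity of $\psi$ at $0$ extends concavity to $[0,\infty)$.

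Estimate \eqref{vkaux2eq2} is the only delicate one, and the lower bound it hides is what I expect to be the main obstacle. The function $F(t) := (1+\operatorname{arsinh} t)^{p/2}$ is concave on $[0,\infty)$ --- a concave increasing power ($p/2\le 1$) of the concave increasing $1+\operatorname{arsinh} t\ge 1$ --- with $F'(|b_m|) = \tfrac{p}{2}(1+\operatorname{arsinh}|b_m|)^{p/2-1}\tfrac{1}{|a_m|} \le \tfrac{1}{|a_m|}$. The tangent-line inequality $F(s)\le F(|b_m|) + F'(|b_m|)(s-|b_m|)$ then gives \eqref{vkaux2eq2}, but only after replacing $F'(|b_m|)$ by the larger slope $1/|a_m|$, which is legitimate precisely when $s \ge |b_m|$. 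So the substance of \eqref{vkaux2eq2} is the Plancherel-type bound $s \ge |b_m|$; by the power-mean inequality (using $q\ge 2$) it suffices to show $\tfrac{1}{d}\sum_k|B_k|^2 \ge |b_m|^2$, equivalently $\tfrac{1}{d}\sum_k|A_k|^2 \ge |a_m|^2$ since $|A_k|^2 = 1+|B_k|^2$. The $(1,1)$-entries in \eqref{vkmatrices} admit an expansion analogous to the one displayed for $B_k$, namely $A_k = \sum_\sigma c_\sigma e^{2\pi i k n_\sigma/d}$ with $\sigma$ ranging over the selections of an even number of the $b$'s, $n_\sigma$ the alternating sum of the selected indices, and $c_\sigma$ the corresponding product; hence $\tfrac{1}{d}\sum_k|A_k|^2 = \sum_{n\in\mathbb{Z}_d}\big|\sum_{n_\sigma\equiv n}c_\sigma\big|^2 \ge \big|\sum_{n_\sigma\equiv 0}c_\sigma\big|^2$, and the sum over $n_\sigma\equiv 0$ consists of $\sigma = \emptyset$, of value $\prod_j a_j$, together with terms having at least four $b$'s, hence at least three $b$'s of index $\ne m$. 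In Case 2 those extra terms contribute $\lesssim d^4|a_m||b_{m^\ast}|^3$, while $|\prod_j a_j| = |a_m|\prod_{l\ne m}|a_l| \ge |a_m|(1+\tfrac14|b_{m^\ast}|^2)$, so since $|b_{m^\ast}|\le t_d$ the bounds \eqref{vkboundsfortn} make the gain $\tfrac14|a_m||b_{m^\ast}|^2$ outweigh the loss; therefore $\big|\sum_{n_\sigma\equiv 0}c_\sigma\big| \ge |a_m|$ and $\tfrac{1}{d}\sum_k|A_k|^2 \ge |a_m|^2$, as required.

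Estimates \eqref{vkaux2eq3} and \eqref{vkaux2eq4} are routine bookkeeping. For \eqref{vkaux2eq3}, Minkowski's inequality in $\ell^q_k$ reduces matters to the pointwise bound $|B_k - B''_k| \le 2^{4p-1}d^{5p}|a_m||b_{m^\ast}|^p$ followed by \eqref{vklineartermshyvar2}; here $B_k - B''_k$ splits into the genuinely nonlinear terms of $B_k$ (at least three $b$'s, at most one of them $b_m$) and the error made in the linear terms by replacing each $a_l$, $l\ne m$, with $a_l/|a_l|$, both of which are controlled using $|b_j|\le t_d$ for $j\ne m$, $\prod_{l\ne m}|a_l| - 1 \lesssim \sum_{l\ne m}|b_l|^2$, $|b_m|\le|a_m|$, the bounds \eqref{vkboundsfortn}, and $|b_{m^\ast}|^2, |b_{m^\ast}|^3 \le |b_{m^\ast}|^p$ (valid since $|b_{m^\ast}|\le 1$ and $p\le 2$). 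For \eqref{vkaux2eq4}, the tangent-line bound $(|b_m|^p + Y)^{1/p} \le |b_m| + \tfrac{Y}{p|b_m|^{p-1}}$ with $Y = |a_m|^p\sum_{j\ne m}|b_j|^p \le (d-1)|a_m|^p|b_{m^\ast}|^p$, together with $|b_m| > t_d > 2^{-5}d^{-5}$ (so that $|a_m|/|b_m| = \sqrt{1+|b_m|^{-2}} < 2^6 d^5$), gives the claimed estimate after collecting the powers of $2$ and $d$ and using $p\le 2$.
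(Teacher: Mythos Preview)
Your proof is correct and for \eqref{vkaux2eq1}, \eqref{vkaux2eq3}, \eqref{vkaux2eq4} it is essentially the paper's argument (your choice of $\psi(u)=(1+\operatorname{arsinh}(u^{1/q}))^{q/2}$ in place of the paper's $\psi(t)=(1+\operatorname{arsinh}(t^{2/q}))^{q/2}$ is a cosmetic reparametrisation that lets you skip the power--mean step).

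The one substantive difference is in \eqref{vkaux2eq2}. The paper does \emph{not} prove the lower bound $s\ge|b_m|$; it simply observes, just before the proof, that one may assume $s\ge|b_m|$ because if $s<|b_m|$ then \eqref{vkaux2eq1} alone already gives \eqref{vkcase2reduced}. With that standing assumption, \eqref{vkaux2eq2} is immediate from concavity of $t\mapsto(1+\operatorname{arsinh} t)^{p/2}$ and $F'(|b_m|)\le 1/|a_m|$. You instead establish $s\ge|b_m|$ outright via a Plancherel computation on $\mathbb{Z}_d$ for the $A_k$'s, using that two--$b$ terms in $A_k$ carry nonzero frequency and that the $\ge 4$--$b$ terms are negligible in Case~2. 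That argument is correct (the key cancellation $n_\sigma=j_2-j_1\not\equiv 0$ for two--selections is exactly what makes it work), and it has the virtue of proving the lemma as literally stated, without the side assumption the paper quietly inserts. The cost is a page of extra work that the paper avoids by a one--line case split; the paper's route is shorter, yours is more self--contained.
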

Estimate (\ref{vkcase2reduced}) follows by successively substituting
left hand side of each inequality (\ref{vkaux2eq2})--(\ref{vkaux2eq4}) into the preceding one.
Also, we may assume
\ $(\frac{1}{d}\sum_{k}|B_k|^q )^{1/q}$ $\geq |b_m|$ \
in (\ref{vkaux2eq2}),
since otherwise the desired estimate (\ref{vkcase2reduced}) trivially follows from (\ref{vkaux2eq1}).

\begin{proof}[Proof of Lemma \ref{vkaux2}]

In order to prove (\ref{vkaux2eq1}), we consider the function
$$ \psi(t):=\Big(1+\mathop{\mathrm{arsinh}}(t^{2/q})\Big)^\frac{q}{2} . $$
One can calculate:
\begin{align*}
& \psi'(t) = t^{2/q-1}(1+t^{4/q})^{-\frac{1}{2}}
\Big(1+\mathop{\mathrm{arsinh}}(t^{2/q})\Big)^{\frac{q}{2}-1} , \\
& \psi''(t) = -\frac{1}{2q} t^{2/q-2}(1+t^{4/q})^{-\frac{3}{2}}
\Big(1+\mathop{\mathrm{arsinh}}(t^{2/q})\Big)^{\frac{q}{2}-2} \\
& \cdot\Big( 2\big((q\!-\!2)\!+\!q\, t^{4/q}\big)
\mathop{\mathrm{arsinh}}(t^{2/q}) +
(q\!-\!2)\big((1\!+\!t^{4/q})^{\frac{1}{2}}\!-\!t^{2/q}\big)^2
+ (q\!-\!2) + 4t^{4/q} \Big) ,
\end{align*}
and conclude (using $q\geq 2$) that $\psi$ is increasing and concave on $[0,\infty)$.
Jensen's inequality and elementary inequalities between power means (see \cite{MV}) give (\ref{vkaux2eq1}):
$$ \frac{1}{d}\sum_{k=0}^{d-1}\psi(|B_k|^{q/2})
\leq \psi\Big(\frac{1}{d}\sum_{k=0}^{d-1}|B_k|^{q/2}\Big) \leq
\psi\bigg(\Big(\frac{1}{d}\sum_{k=0}^{d-1}|B_k|^q\Big)^\frac{1}{2}\bigg) \,. $$

A couple of applications of the mean value theorem,
for $(1+t)^{p/2}$ and for $\mathop{\mathrm{arsinh}}t$,
together with $1\leq p\leq 2$, $\sqrt{1+|b_m|^2} = |a_m|$, yield (\ref{vkaux2eq2}).

For (\ref{vkaux2eq3}) we first estimate the perturbation due to nonlinear terms:
\begin{align*}
|B_k - B'_k| & \leq \sum_{j_1<j_2<j_3} |b_{j_1}| |b_{j_2}| |b_{j_3}| \Big(\prod_{l\neq j_1,j_2,j_3}(|a_l|+|b_l|)\Big) \\
& \leq d^3 (1+2t_d)^{d-3} (|a_m|+|b_m|) |b_{m^\ast}|^2 \leq 4 d^3
|a_m| |b_{m^\ast}|^2 \,,
\end{align*}
and furthermore compare:
\begin{align*}
|B'_k - B''_k| \ & \leq |b_m|\Big(\prod_{l\neq m}|a_l| -1\Big)+\sum_{j\neq m}|a_m| |b_j|\Big(\prod_{l\neq m,j}|a_l| -1\Big) \\
& \leq d |a_m| \Big(\prod_{l\neq m}|a_l| -1\Big) \leq d |a_m| \Big(e^{\frac{1}{2}\sum_{l\neq m}\!|b_l|^2} -1\Big)
\leq d^2 |a_m| |b_{m^\ast}|^2 \,,
\end{align*}
where in the last line we used $e^x-1 \leq e^x x$ for $x\geq 0$.
These two estimates can be combined,
so that Minkowski's inequality,
together with (\ref{vklineartermshyvar2}),\, $5d^3\leq 2^{4p-1}d^{5p}$, and
$|b_{m^\ast}|^2 \leq |b_{m^\ast}|^p$, gives (\ref{vkaux2eq3}).

To deduce the last estimate (\ref{vkaux2eq4}), we use the mean value theorem
for $t^{1/p}$, and
$\frac{|a_m|}{|b_m|}\leq 1+\frac{1}{|b_m|}\leq\frac{2}{t_d}\leq 2^6 d^5$.
\begin{align*}
& \Big(|b_m|^p \!+\! |a_m|^p\!\sum_{j\neq m}|b_j|^p\Big)^{\frac{1}{p}} - |b_m|
\ \leq \ (|b_m|^p)^{\frac{1}{p}-1} |a_m|^p\!\sum_{j\neq m}|b_j|^p \\
& \leq \ \frac{d |a_m|^p |b_{m^\ast}|^p}{|b_m|^{p-1}} \ \leq \
2^{6p-6} d^{5p-4} |a_m| |b_{m^\ast}|^p \leq \ 2^{4p-1} d^{5p}
|a_m| |b_{m^\ast}|^p \,.
\end{align*}
This proves Lemma \ref{vkaux2}.
\end{proof}

\medskip
\noindent\emph{Case 3.} \ $|b_m|>t_d$ and $|b_{m^\ast}|>t_d$.

\smallskip
Observe that it suffices to prove
$$ \beta_d(|B_k|)^2\leq \sum_{j=0}^{d-1}\beta_d(|b_j|)^{2}, $$
for every $k=0,\ldots,d-1$, because then by elementary
inequalities for $\ell^p$ norms (see \cite{MV}) we have
$$ \Big(\frac{1}{d}\sum_{k=0}^{d-1}\beta_d(|B_k|)^{q}\Big)^{\frac{1}{q}}
\leq \max_{0\leq k\leq d-1}\beta_d(|B_k|) \leq
\Big(\sum_{j=0}^{d-1}\beta_d(|b_j|)^{2}\Big)^{\frac{1}{2}} \leq
\Big(\sum_{j=0}^{d-1}\beta_d(|b_j|)^{p}\Big)^{\frac{1}{p}}. $$

The rest of the proof is a simple observation taken for instance
from \cite{MTT} or \cite{TT}, but we include it for completeness.
Split the set of indices $\{0,\ldots,d\!-\!1\}$ into
$$ J_{\mathrm{big}} := \{j \,:\, |b_j|>t_d \} \qquad\textrm{and}\qquad
J_{\mathrm{small}} := \{j \,:\, |b_j|\leq t_d \} , $$ so in this
case $|J_{\mathrm{big}}|\geq 2$.
Since the spectral norm of any
{\scriptsize $\bigg[\!\!\begin{array}{cc} a & \!\!\!\overline{b} \\ b & \!\!\!\overline{a} \end{array}\!\!\bigg]$}
$\in\mathrm{SU}(1,1)$ is equal to $|a|+|b|$,
using submultiplicativity of operator norms and (\ref{vkmatrices}) we deduce
$$ |A_k|+|B_k| \leq \prod_{j=0}^{d-1} (|a_j| + |b_j|), $$
which can, after taking logarithms, be written as
$$ \mathop{\mathrm{arsinh}}|B_k| \leq \sum_{j=0}^{d-1} \mathop{\mathrm{arsinh}}|b_j|. $$
By (\ref{vkbetaineq3}) one always has
$$ \beta_d(|B_k|)^2 \leq (2d)^{-10}(1+\mathop{\mathrm{arsinh}}|B_k|)\,, $$
and thus we estimate:
\begin{align*}
\beta_d(|B_k|)^2 \, & \leq (2d)^{-10}\Big(1+\sum_{j\in
J_{\mathrm{big}}}\!\mathop{\mathrm{arsinh}}|b_j|
+|J_{\mathrm{small}}|\mathop{\mathrm{arsinh}} t_d\Big) \\[-2mm]
& \leq (2d)^{-10}\!\!\sum_{j\in J_{\mathrm{big}}}(1+\mathop{\mathrm{arsinh}}|b_j|)
= \sum_{j\in J_{\mathrm{big}}}\!\!\beta_d(|b_j|)^{2} \leq\sum_{j=0}^{d-1}\beta_d(|b_j|)^{2} \,.
\end{align*}
In the above calculation we used \ $d \mathop{\mathrm{arsinh}} t_d
\leq d t_d \leq 1$ \ and $|J_{\mathrm{big}}|\geq 2$.

This concludes the last case, and therefore Proposition \ref{vkmainprop} is established.

\section{A closing remark}

While the estimate of Theorem \ref{vkmaintheorem} is independent
of $p$, the proof makes it seriously dependent on $d$.
It is not clear if the latter dependence can
be avoided, but if so, it would require genuinely new methods.
Suppose for a moment that we can construct $\beta=\beta_d$ as in
Proposition \ref{vkmainprop} that does not depend on $d$.
If we take $d\to\infty$ in (\ref{vkmainpropeq2}), we will recover an
analogue of Conjecture \ref{vkconjecture} on the group
$\mathbb{Z}$ (as stated in \cite{TT}), and by an easy transference
principle the actual Conjecture \ref{vkconjecture} (on $\mathbb{R}$) would also follow.
Therefore, uniformization of the constants in $d$
turns out to be an even harder problem than the original one.

\bibliographystyle{amsplain}

\end{document}